\newtheorem{theorem}{Theorem}
\newtheorem{lemma}{Lemma}
\newtheorem{proposition}{Proposition}
\theoremstyle{remark}
\newtheorem*{remark*}{Remark}
\def\eps{{\varepsilon}}
\newcommand{\p}{\mathbf p}
\newcommand{\q}{\mathbf q}
\newcommand{\br}{\mathbf r}
\newcommand{\bv}{\mathbf v}
\newcommand{\x}{\mathbf x}
\newcommand{\NN}{\mathbb N}
\newcommand{\PP}{\mathbb P}
\newcommand{\RR}{\mathbb R}
\newcommand{\TT}{\mathbb T}
\newcommand{\ZZ}{\mathbb Z}
\newcommand{\bone}{\mathbf 1}
\newcommand{\I}{\textrm{I}}
\DeclarePairedDelimiter{\abs}{\lvert}{\rvert}
\DeclarePairedDelimiter{\parens}{\lparen}{\rparen}
\DeclarePairedDelimiter{\set}{\lbrace}{\rbrace}
\DeclareMathOperator{\Leb}{Leb}
\title{The Duffin--Schaeffer conjecture for systems of linear forms}
\author{Felipe A.~Ram{\'i}rez}
\address{Wesleyan University \\ Middletown, CT, USA}
\email{framirez@wesleyan.edu}
\date{}
\subjclass[2020]{Primary: 11J83, 11J13, 11K60} \keywords{Metric number
  theory, Diophantine approximation, Duffin--Schaeffer conjecture,
  linear forms}
\begin{document}

\frenchspacing

\begin{abstract}
  We extend the Duffin--Schaeffer conjecture to the setting of systems
  of $m$ linear forms in $n$ variables. That is, we establish a
  criterion to determine whether, for a given rate of approximation,
  almost all or almost no $n$-by-$m$ systems of linear forms are
  approximable at that rate using integer vectors satisfying a natural
  coprimality condition. When $m=n=1$, this is the classical
  1941 Duffin--Schaeffer conjecture, which was proved in 2020 by
  Koukoulopoulos and Maynard. Pollington and Vaughan proved the
  higher-dimensional version, where $m>1$ and $n=1$, in 1990. The
  general statement we prove here was conjectured in 2009 by
  Beresnevich, Bernik, Dodson, and Velani. For approximations with no
  coprimality requirement, they also conjectured a generalized version
  of Catlin's conjecture, and in 2010 Beresnevich and Velani proved
  the $m>1$ cases of that. Catlin's classical conjecture, where
  $m=n=1$, follows from the classical Duffin--Schaeffer
  conjecture. The remaining cases of the generalized version, where
  $m=1$ and $n>1$, follow from our main result. Finally, through the
  Mass Transference Principle, our main results imply their Hausdorff
  measure analogues, which were also conjectured by Beresnevich
  \emph{et al} (2009).
\end{abstract}

\maketitle

%\tableofcontents

\section{Introduction}
\label{sec:intro}

\subsection{Results}
\label{sec:main-results}

We establish the following result, which was conjectured by
Beresnevich, Bernik, Dodson, and Velani in~\cite[2009]{BBDV}. It is the
generalization of the Duffin--Schaeffer conjecture to the setting of
systems of linear forms.

\begin{theorem}\label{thm:multiv}
  Let $m$ and $n$ be positive integers. If $\psi:\ZZ^n\to\RR_{\geq 0}$
  is a function such that
  \begin{equation}\label{eq:maindiv}
    \sum_{\q\in\ZZ^n\setminus\set{0}}\parens*{\frac{\varphi(\gcd(\q))\psi(\q)}{\gcd(\q)}}^m = \infty,
  \end{equation}
  then for Lebesgue-almost every $\x\in \operatorname{Mat}_{n\times m}(\RR)$
  there are infinitely many $(\p,\q)\in\ZZ^m\times\ZZ^n$ such that
  \begin{equation}\label{eq:1}
    \abs*{\q\x - \p} < \psi(\q)
  \end{equation}
  and
  \begin{equation}
    \label{eq:coprimality}
    \gcd(p_i, \q)=1 \quad\textrm{for every}\quad i=1, \dots, m,
  \end{equation}
  both hold. Conversely, if the sum converges, then there are almost
  no such $\x$.
\end{theorem}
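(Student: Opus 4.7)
My plan is to recast the problem on the torus $\TT^{nm}$: for each $\q\in\ZZ^n\setminus\{0\}$, let $A_\q$ denote the set of $\x\in\TT^{nm}$ satisfying \eqref{eq:1} and \eqref{eq:coprimality} for some $\p$. Writing $\q=d\br$ with $\br$ primitive and $d=\gcd(\q)$, the projection $\pi_\br\colon\TT^{nm}\to\TT^m$, $\x\mapsto\br\x$, is Haar-preserving, and $A_\q=\pi_\br^{-1}(B_{d,\br})$ where $B_{d,\br}$ is a union of $\varphi(d)^m$ cubes of side $2\psi(\q)/d$ (indexed by residues $\p\bmod d$ with $\gcd(p_i,d)=1$). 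In particular
\[
  \Leb(A_\q)\asymp\left(\frac{\varphi(d)\psi(\q)}{d}\right)^m,
\]
and in the convergence case the first Borel--Cantelli lemma will immediately yield the conclusion.

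For the divergence case my aim is the quasi-independence bound
\[
  \sum_{\q_1,\q_2\in Q}\Leb(A_{\q_1}\cap A_{\q_2})\ll\left(\sum_{\q\in Q}\Leb(A_\q)\right)^2
\]
for every finite $Q\subset\ZZ^n\setminus\{0\}$; from this the Chung--Erd\H{o}s inequality produces positive measure, and a zero--one law in the spirit of Gallagher will upgrade it to full measure. I will split the double sum according to the primitive parts $\br_1,\br_2$ of $\q_1,\q_2$. For $\br_1\neq\pm\br_2$, the primitive vectors are $\mathbb{Q}$-linearly independent, which makes the joint projection $(\pi_{\br_1},\pi_{\br_2})\colon\TT^{nm}\to\TT^{2m}$ Haar-preserving; hence $\Leb(A_{\q_1}\cap A_{\q_2})=\Leb(A_{\q_1})\Leb(A_{\q_2})$ exactly, and this off-diagonal contribution is trivially $\leq(\sum\Leb(A_\q))^2$.

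The diagonal contribution (with $\br_1=\br_2=\br$) will descend through $\pi_\br$ to the classical Duffin--Schaeffer problem on $\TT^m$ with one-variable approximation function $d\mapsto\psi(d\br)$, and so falls under the known $n=1$ cases of the conjecture: Pollington--Vaughan for $m\geq 2$ and Koukoulopoulos--Maynard for $m=1$. Both proofs are built on a quasi-independence inequality that will supply $\sum_{d_1,d_2}\Leb(A_{d_1\br}\cap A_{d_2\br})\ll S_\br^2$, with $S_\br\coloneqq\sum_{d:\,d\br\in Q}\Leb(A_{d\br})$ and an absolute implied constant. Summing over primitive $\br$ and applying the elementary inequality $\sum_\br S_\br^2\leq(\sum_\br S_\br)^2$, valid for any non-negative sequence, will bound the diagonal piece by $\ll(\sum\Leb(A_\q))^2$ as desired.

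The main obstacle will be this diagonal step: I need a quasi-independence version of PV and KM whose constant is uniform in $\br$ and in the function $d\mapsto\psi(d\br)$, rather than merely the full-measure divergence conclusion that is usually stated. The KM argument, via the GCD-graph anatomy of integers, and the PV argument, via its classical sieving, both do contain such a uniform quasi-independence statement once the coprimality condition $\gcd(p_i,\q)=1$ is correctly translated (through the identity $\gcd(p_i,\q)=\gcd(p_i,d)$) into its one-variable form; the challenge will be the careful bookkeeping needed to extract and apply it. The remaining ingredients, namely Haar-preservation of the joint projections in distinct primitive directions and the volume computation for $A_\q$, are routine.
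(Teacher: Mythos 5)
Your overall framework — pass to the torus, split the pairwise overlaps into a ``linearly independent'' part (handled by exact independence via Fourier analysis, as in the paper's Lemma~\ref{lem:indA}) and a ``same primitive direction'' part, then run Chung--Erd\H{o}s and a zero--one law — is exactly the skeleton of the paper's argument, and the off-diagonal piece goes through as you say. The genuine gap is at the step you flag as ``the main obstacle'' and then dismiss as bookkeeping: it is not bookkeeping, and the inequality you want is both false as written and unavailable in the form you need. First, the cosmetic falsity: $\sum_{d_1,d_2}\Leb(A_{d_1\br}\cap A_{d_2\br})$ includes $d_1=d_2$ and therefore is at least $S_\br$, which exceeds $C S_\br^2$ whenever $S_\br$ is small. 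More seriously, even after excluding the diagonal $d_1=d_2$, the uniform-in-$\br$ quasi-independence $\sum_{d_1\neq d_2}\Leb(A_{d_1\br}\cap A_{d_2\br})\ll S_\br^2$ is \emph{not} something the Pollington--Vaughan or Koukoulopoulos--Maynard arguments furnish. What those proofs give (and what appears in this paper as Proposition~\ref{prop:KMVP}) is a bound on the overlap sum \emph{on average} over a range $[X,Y]$ chosen so that the corresponding measure sum is bounded above and below by absolute constants. The troublesome factor $\prod_{p\mid k\ell/(k,\ell)^2,\,p>M/(k,\ell)}(1+1/p)$ in Proposition~\ref{lem:PVoverlaps} is only controllable after averaging over many pairs $(k,\ell)$, and for a fixed primitive $\br$ the one-variable function $d\mapsto\psi(d\br)$ can have arbitrarily small (even zero) total mass $S_\br$, so there is no range over which to average. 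Your plan therefore needs, for each $\br$, a quasi-independence statement that simply does not exist.

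The paper's resolution of exactly this difficulty is what makes the argument work and is the part your proposal is missing. Rather than running KM/PV separately in each direction, one \emph{aggregates}: set $\Psi(d)=\bigl(\sum_{\q\in\PP^n}\psi(d\q)^m\bigr)^{1/m}$, so that the divergence hypothesis \eqref{eq:maindiv} is precisely $\sum_d(\varphi(d)\Psi(d)/d)^m=\infty$, and one can choose ranges $[X,Y]$ on which this aggregated sum is $\asymp 1$, putting us squarely in the regime where Proposition~\ref{prop:KMVP} applies. The price is that one must compare the overlap $\Leb(A_{1,m}(k,\psi(k\q))\cap A_{1,m}(\ell,\psi(\ell\q)))$ for each individual $\q$ with the aggregated overlap $\Leb(A_{1,m}(k,\Psi(k))\cap A_{1,m}(\ell,\Psi(\ell)))$; this is done by a concentric-scaling lemma (Lemma~\ref{lem:dilation}), which in turn requires replacing $A'$ by a slightly thinned-out version $A''$ supported on a Vitali-separated subset $P_d$ of the numerators (Lemma~\ref{lem:size}), so that the balls being scaled are genuinely disjoint. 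Summing the scaling factors $\max\{(\psi(k\q)/\Psi(k))^m,(\psi(\ell\q)/\Psi(\ell))^m\}$ over $\q\in\PP^n$ then gives a bounded factor, and Proposition~\ref{prop:KMVP} applied to the single function $\Psi$ closes the argument. This aggregate-then-disaggregate device, not a uniform version of KM/PV, is the heart of the proof; without it the diagonal contribution cannot be controlled.
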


\begin{remark*}
  Throughout this paper, $\abs{\cdot}$ denotes sup-norm, $\varphi$ is
  Euler's totient function, and $\gcd(\q)$ is the greatest common
  divisor of the components of $\q$, which we implicitly regard as a
  row vector.
\end{remark*}

When $m=n=1$, Theorem~\ref{thm:multiv} is the classical
Duffin--Schaeffer conjecture~\cite[1941]{duffinschaeffer}, which was
proved by Koukoulopoulos and Maynard~\cite[2020]{KMDS}. When $n=1$ and
$m>1$, Theorem~\ref{thm:multiv} gives the higher-dimensional
``simultaneous'' version of the Duffin--Schaeffer conjecture, which
was proved by Pollington and
Vaughan~\cite[1990]{PollingtonVaughan}. The cases of
Theorem~\ref{thm:multiv} where $nm>1$ comprise the version for
``approximation of systems of linear forms.'' The subcases where $m=1$
and $n>1$ are often called the ``dual'' Duffin--Schaeffer conjecture.

\

For solutions to~(\ref{eq:1}) without the coprimality
requirement~(\ref{eq:coprimality}), we complete the following
result---also conjectured by Beresnevich, Bernik, Dodson, and Velani
in~\cite{BBDV}---by proving its $m=1, n>1$ cases. It is the
generalization of Catlin's conjecture to the setting of systems of
linear forms.

\begin{theorem}\label{thm:catlin}
  Let $m$ and $n$ be positive integers. For $\q\in\ZZ^n$, let
  \begin{equation*}
  \Phi_m(\q) = \#\set{\p\in\ZZ^m : \abs{\p} \leq \abs{\q},
    \gcd(\p,\q)=1}.
\end{equation*}
If $\psi:\ZZ^n\to\RR_{\geq 0}$ is a function such that
  \begin{equation}\label{eq:catlindiv}
    \sum_{\q\in\ZZ^n\setminus\set{0}}\Phi_m(\q) \sup_{t\geq 1}\parens*{\frac{\psi(t\q)}{t\abs{\q}}}^m = \infty,
  \end{equation}
  then for almost every $\x\in \operatorname{Mat}_{n\times m}(\RR)$
  there are infinitely many $(\p,\q)\in\ZZ^m\times\ZZ^n$ such
  that~(\ref{eq:1}) holds. Conversely, if the series converges, then
  there are almost no such $\x$.
\end{theorem}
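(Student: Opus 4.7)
The strategy is to deduce Theorem~\ref{thm:catlin} in the remaining case $m=1$, $n>1$ from Theorem~\ref{thm:multiv} (with $m=1$) by feeding the latter a smoothed approximation function. I would set
\[
  \Psi(\q) := \sup_{t\geq 1} \frac{\psi(t\q)}{t}, \qquad \q\in\ZZ^n\setminus\{0\},
\]
and first verify that the Catlin-type series~(\ref{eq:catlindiv}) (with $m=1$) converges or diverges together with the Duffin--Schaeffer series~(\ref{eq:maindiv}) for $\Psi$. The comparison rests on the elementary estimate $\Phi_1(\q)\asymp \abs{\q}\varphi(\gcd(\q))/\gcd(\q)$, obtained from the identity $\gcd(p,\q)=\gcd(p,\gcd(\q))$ and inclusion--exclusion; both series then reduce, up to multiplicative constants, to $\sum_{\q\ne 0}\varphi(\gcd(\q))\Psi(\q)/\gcd(\q)$.

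For the divergence half, assuming~(\ref{eq:catlindiv}) diverges, Theorem~\ref{thm:multiv} applied to $\Psi$ supplies, for almost every $\x\in\RR^n$, infinitely many $(p_0,\q_0)\in\ZZ\times\ZZ^n$ with $\gcd(p_0,\q_0)=1$ and $\abs{\q_0\x-p_0}<\Psi(\q_0)$. For each such pair I would pick $t=t(p_0,\q_0)\geq 1$ with $\psi(t\q_0)/t>\abs{\q_0\x-p_0}$; multiplying through by $t$ yields $\abs{(t\q_0)\x-tp_0}<\psi(t\q_0)$, so $(tp_0,t\q_0)$ is a solution to~(\ref{eq:1}). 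The map $(p_0,\q_0,t)\mapsto(tp_0,t\q_0)$ on coprime triples is injective, since $t=\gcd(tp_0,t\q_0)$ can be read off from the image, so infinitely many coprime base pairs produce infinitely many distinct solutions.

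For the convergence half, the same comparison shows that Theorem~\ref{thm:multiv}'s series for $\Psi$ converges, so almost every $\x$ admits only finitely many coprime $(p_0,\q_0)$ with $\abs{\q_0\x-p_0}<\Psi(\q_0)$. It remains to ensure each such coprime pair contributes only finitely many $t$'s. For this I would restrict~(\ref{eq:catlindiv}) to the multiples $\q=d\q_0'$ of a fixed primitive $\q_0'$: the change of variable $s=td$ together with $\Phi_1(d\q_0')\asymp\abs{\q_0'}\varphi(d)$ turns the restricted sum into $\sum_{d\geq 1}\varphi(d)\sup_{s\geq d}\psi(s\q_0')/s$, whose finiteness forces $\psi(s\q_0')/s\to 0$ as $s\to\infty$ (since $\sum_{d\leq N}\varphi(d)\asymp N^2$). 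The decay extends to every $\q_0\ne 0$ by rescaling, so $\{t\geq 1:\abs{\q_0\x-p_0}<\psi(t\q_0)/t\}$ is finite for every coprime $(p_0,\q_0)$ and for almost every $\x$.

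The delicate point is this last step: Theorem~\ref{thm:multiv}'s convergence bounds only the number of \emph{coprime} solutions, so one must separately extract the tail decay of $\psi(t\q)/t$ from~(\ref{eq:catlindiv}) itself in order to rule out a single coprime base pair generating infinitely many $(p,\q)$.
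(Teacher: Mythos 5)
Your proposal follows essentially the same route as the paper's: you introduce $\overline\psi(\q)=\sup_{t\geq 1}\psi(t\q)/t$, use the estimate $\Phi_1(\q)\asymp\abs{\q}\,\varphi(\gcd\q)/\gcd\q$ to match the Catlin series for $\psi$ with the Duffin--Schaeffer series for $\overline\psi$, apply Theorem~\ref{thm:multiv} to $\overline\psi$, and pass between coprime and general solutions via the tail decay $\psi(t\q)/t\to 0$. The paper packages the last step through the identity $W(\psi)=W(\overline\psi)$ and an explicit equivalence with a ``weak'' (coprimality-free) form of Theorem~\ref{thm:multiv}, but the underlying reduction is the same as yours.
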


When $m=n=1$, this is Catlin's conjecture~\cite[1976]{CatlinConj},
which follows from Koukoulopoulos and Maynard's proof of the
Duffin--Schaeffer conjecture. The cases of Theorem~\ref{thm:catlin}
where $m>1$ were proved by Beresnevich and
Velani~\cite{BVKG}. Interestingly, they do not follow from the $m>1$
cases of Theorem~\ref{thm:multiv}. We prove the remaining ``dual''
cases, where $m=1$ and $n>1$. In these cases, we show that
Theorem~\ref{thm:catlin} is actually equivalent to a version of
Theorem~\ref{thm:multiv} where the condition~(\ref{eq:coprimality})
has been removed.

\subsection{Context}
\label{sec:background}

These results belong to the field of metric Diophantine
approximation. The most basic questions in this area have to do with
approximating real numbers $x$ with rational numbers $p/q$. One
endeavors to understand how small the quantity $\abs{qx-p}$ can be,
while controlling the size of the denominator $q$. The seminal theorem
is Dirichlet's approximation theorem. It implies that for every real
$x$ there are infinitely many $(p,q)$ for which $\abs{qx-p}<1/q$.

In higher dimensions, one may want to approximate a vector
$\x\in\RR^m$ with rational vectors $\p/q$, where $\p\in\ZZ^m$ and
$q\in\NN$. This is referred to as ``simultaneous approximation''
because the components of $\x$ are real numbers which are being
approximated simultaneously by the components of $\p/q$, all having
the same denominator. The corresponding higher-dimensional Dirichlet
theorem says that for every $\x$ there are infinitely many $(\p,q)$
such that $\abs{q\x-\p} < q^{-1/m}$.

If we try to replace $q^{-1/m}$ by some other function $\psi(q)$, we
quickly find that in general only metric statements are possible. This
is the start of metric Diophantine approximation. The foundational
result here is due to
Khintchine~\cite{Khintchineonedimensional,Khintchine}.

\theoremstyle{plain}
\newtheorem*{theorem*}{Theorem}
\begin{theorem*}[Khintchine's theorem, 1926]
  If $\psi: \NN\to\RR_{\geq 0}$ is a non-increasing function such that
  \begin{equation}\label{eq:KTdiv}
    \sum_{q=1}^\infty \psi(q)^m = \infty,
  \end{equation}
  then for Lebesgue-almost every $\x\in\RR^m$ there are infinitely
  many $(\p,q)\in\ZZ\times\NN$ such that
  \begin{equation}\label{eq:5}
    \abs{q\x - \p}<\psi(q).
  \end{equation}
  Conversely, if the above series converges, then there are almost no
  $\x$ for which this holds.
\end{theorem*}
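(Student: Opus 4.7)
The plan is to prove each half via the appropriate form of the Borel--Cantelli lemma, working throughout on the torus $\TT^m=\RR^m/\ZZ^m$ by periodicity in $\x$. For each $q\in\NN$ set
\[
A_q := \set{\x\in\TT^m : \abs{q\x-\p}<\psi(q) \text{ for some } \p\in\ZZ^m},
\]
which is a union of at most $q^m$ translated cubes of sidelength $2\psi(q)/q$, so $\Leb(A_q)\leq (2\psi(q))^m$ (with equality when $\psi(q)\leq 1/2$). The goal is to show that $\limsup_q A_q$ has Lebesgue measure $0$ or $1$ according as $\sum \psi(q)^m$ converges or diverges.

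The convergence half is then immediate from the first Borel--Cantelli lemma applied to the bound $\Leb(A_q)\leq(2\psi(q))^m$. For the divergence half I would first replace $\psi$ by $\min(\psi,1/2)$, which preserves both monotonicity and divergence, so that $\Leb(A_q)=(2\psi(q))^m$ exactly; then apply the divergent form of Borel--Cantelli. The latter requires the \emph{quasi-independence on average} bound
\[
\sum_{q,q'\leq Q}\Leb(A_q\cap A_{q'}) \leq C\parens*{\sum_{q\leq Q}\Leb(A_q)}^2
\]
for some constant $C$ and infinitely many $Q$. This overlap estimate is the hard part: one bounds $\Leb(A_q\cap A_{q'})$ by the product term plus a contribution from pairs of rationals $\p/q,\p'/q'$ whose separation is controlled by the integer $\abs{q'\p-q\p'}$. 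A dyadic decomposition in the size of that quantity, summed against the weights $\psi(q)^m$, yields the required bound; it is precisely here that the monotonicity of $\psi$ is essential, as it prevents these weights from concentrating on sparse sets of $q$. Removing the monotonicity assumption is the central difficulty resolved by the Duffin--Schaeffer conjecture, generalized in this paper by Theorem~\ref{thm:multiv}.

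A cleaner but (within this paper) logically circular route would be to extend $\psi$ to $\ZZ\setminus\set{0}$ by $\psi(q):=\psi(\abs{q})$ and invoke Theorem~\ref{thm:multiv} with $n=1$, after observing that for monotone $\psi$ the divergence of $\sum_q \psi(q)^m$ forces divergence of the weighted sum~(\ref{eq:maindiv}).
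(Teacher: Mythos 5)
The paper does not prove Khintchine's theorem; it is stated purely as classical background, and the paper's own contribution is the more general Theorem~\ref{thm:multiv}. There is therefore no internal proof to compare against, and I assess your proposal on its own terms.

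Your outline is sound as far as it goes, but there is a genuine gap at the final step. Feeding the quasi-independence bound into the Chung--Erd\H{o}s inequality (Proposition~\ref{lem:erdoschung} here) yields only $\Leb(\limsup_q A_q) \geq 1/C > 0$, not $\Leb(\limsup_q A_q) = 1$. The ``divergent form of Borel--Cantelli'' does not upgrade this automatically unless the ratio
\[
\frac{\sum_{q,q'\leq Q}\Leb(A_q\cap A_{q'})}{\bigl(\sum_{q\leq Q}\Leb(A_q)\bigr)^2}
\]
tends to $1$, which is a sharper claim than the ``$\leq C$'' you state and would itself require justification. To finish the divergence half you need one of two additional ingredients: either a zero--one law for limsup sets of this type (Cassels's, or Gallagher's~\cite{Gallagher01}, exactly as this paper invokes Theorem~\ref{thm:01} for its own main result), or a localization of the overlap estimate to arbitrary subcubes $B\subset\TT^m$ giving $\Leb(\limsup_q A_q\cap B)\gg\Leb(B)$, whence full measure follows from the Lebesgue density theorem. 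Without one of these, your argument proves only positive measure. A smaller but related issue: to bound $\Leb(\limsup_q A_q)$ rather than $\Leb(\bigcup_q A_q)$, the quasi-independence must hold for tail sums $\sum_{Q_0\leq q,q'\leq Q}$ for every starting index $Q_0$, not just for sums starting at $q=1$. The convergence half, the measure computation $\Leb(A_q)\leq(2\psi(q))^m$, the truncation to $\psi\leq 1/2$, and your remarks on where monotonicity is used are all correct, and the observation at the end about circularly deducing the monotone case from Theorem~\ref{thm:multiv} is also right (granting the partial-summation argument that transfers $\sum\psi(q)^m=\infty$ to $\sum(\varphi(q)\psi(q)/q)^m=\infty$ under monotonicity).
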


The ``convergence'' part of this theorem is a straightforward
consequence of the Borel--Cantelli lemma (indeed, all of the
``convergence'' parts in this paper are), and it would hold even
without the monotonicity assumption on $\psi$. This led to the
question of whether the monotonicity assumption was necessary for the
``divergence'' part. As it turns out, one can remove the assumption
when $m>1$ (Gallagher~\cite{Gallagherkt}), but not when $m=1$. Duffin
and Schaeffer showed this by constructing a counterexample function
$\psi$ which was supported on blocks of integers with high
multiplicative interdependence. In the same paper, they posed their
eponymous conjecture, where a coprimality assumption is imposed on the
approximating rationals and the divergence condition is suitably
modified~\cite{duffinschaeffer}. This avoids the issue of
multiplicative interdependence that arises in their counterexample.

It is worth explaining the previous paragraph in a bit more detail
here. The set of points $\x$ which is sought in all of the above
theorems is naturally seen as the ``limsup'' set for a sequence of
sets. For Khintchine's theorem, that sequence of sets is, for each
$q\in\NN$, the set of points $\x\in\RR^m$ which satisfy
$\abs{q\x-\p}<\psi(q)$ with some $\p\in\ZZ^m$. Since these sets are
periodic, it is enough to consider their intersections with $[0,1]^m$,
intersections which we denote $A(q)$ for the moment. The important
point here is that $\Leb(A(q)) \leq 2^m\psi(q)^m$, with equality if
$\psi(q) \leq 1/2$. That is, the sum in~(\ref{eq:KTdiv}) should be
viewed as a measure sum of the sets $A(q)$. This is why the
convergence part of Khintchine's theorem follows easily from the
Borel--Cantelli lemma. The reason the divergence part is more
difficult is that the sets $A(q)$ lack the probabilistic independence
needed to apply the second Borel--Cantelli lemma. The
Duffin--Schaeffer counterexample mentioned above exploits this issue
in dimension $m=1$, and the Duffin--Schaeffer conjecture avoids the
issue by requiring $\gcd(p,q)=1$. But with the coprimality requirement,
the limsup set in question corresponds to a sequence of smaller sets,
denoted $A'(q)$, whose measures are comparable to
$\frac{\varphi(q)\psi(q)}{q}$ in dimension one, and
$\parens*{\frac{\varphi(q)\psi(q)}{q}}^m$ in general. This is where
the divergence condition in the Duffin--Schaeffer conjecture comes
from, and by extension it is where~(\ref{eq:maindiv}) comes from.

\

The Duffin--Schaeffer conjecture animated a great deal of research in
Diophantine approximation until it was finally proved in a
breakthrough of Koukoulopoulos and Maynard in 2020~\cite{KMDS}. In the
meantime, many results were proved that brought us nearer to the full
conjecture:
\begin{itemize}
\item In~\cite{Gallagher01}, Gallagher established a zero-one law
  which showed that in order to prove the Duffin--Schaeffer
  conjecture, it sufficed to prove that its conclusion held for a
  positive measure set of real numbers $x$. 
\item In~\cite{ErdosDS}, Erd{\H o}s proved the Duffin--Schaeffer
  conjecture for functions taking only the values $0$ and $c/n$, for
  some fixed $c>0$.
\item In~\cite{Vaaler}, Vaaler proved the Duffin--Schaeffer conjecture
  under the additional assumption that $\psi(q) = O(1/q)$.
\item In~\cite{PollingtonVaughan,PVbordeaux}, Pollington and Vaughan
  proved the $m$-dimensional Duffin--Schaeffer conjecture, $m>1$.
\item
  In~\cite{HPVextra,BHHVextraii,AistleitnerDS,AistleitneretalExtraDivergence},
  Haynes--Pollington--Velani, Beresnevich--Harman--Haynes--Velani,
  Aistleitner, and
  Aistleitner--Lachmann--Munsch--Technau--Zafeiropoulos proved
  versions of the Duffin--Schaeffer conjecture where $\psi$ enjoys
  various forms of ``extra divergence.''
\end{itemize}
These hold an important place in metric Diophantine
approximation. Gallagher's zero-one law (and a higher-dimensional
version due to Vilchinski~\cite{Vilchinski}) was used in all of the
above cited results that followed it, including, of course, in the
eventual proof by Koukoulopoulos and Maynard. The overlap estimates
appearing in Erd{\H o}s's, Vaaler's, and Pollington \& Vaughan's work
(seen here as Proposition~\ref{lem:PVoverlaps}) contain in them the
obstacle that had to be overcome in order to prove the
Duffin--Schaeffer conjecture, hence the focus of most of the work that
followed. Naturally, these themes will appear in the present paper as
well.

\subsection{Approximation of systems of linear forms}
\label{sec:appr-syst-line}

During all of this, there has been the parallel development of the
theory of approximation of systems of linear forms. Here, one takes a
real $n\times m$ matrix $\x$ and examines the quantity
$\abs{\q\x - \p}$, with $(\p,\q)\in\ZZ^m\times\ZZ^n$. Groshev proved
an analogue of Khintchine's theorem for this kind of
approximation~\cite{Groshev}.

\begin{theorem*}[Khintchine--Groshev theorem, 1931]
  If $\psi: \NN\to\RR_{\geq 0}$ is a non-increasing function such that
  \begin{equation}\label{eq:KGdiv}
    \sum_{q=1}^\infty q^{n-1}\psi(q)^m = \infty,
  \end{equation}
  then for Lebesgue-almost every
  $\x\in\operatorname{Mat}_{n\times m}(\RR)$ there are infinitely many
  $(\p,\q)\in\ZZ^m\times\ZZ^n$ such that
  \begin{equation}\label{eq:6}
    \abs{\q\x - \p}<\psi(\abs{\q}).
  \end{equation}
  Conversely, if the above series converges, then there are almost no
  $\x$ for which this holds.
\end{theorem*}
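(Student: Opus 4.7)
The convergence direction is immediate from the Borel--Cantelli lemma. For $\q\in\ZZ^n\setminus\set{0}$, let $A(\q)\subset[0,1]^{nm}$ denote the set of matrices $\x$ satisfying $\abs{\q\x-\p}<\psi(\abs{\q})$ for some $\p\in\ZZ^m$. Because the columns of $\x$ appear independently in $\q\x$, and each component of $\q\x\bmod 1$ is uniformly distributed on $\TT$, one has $\Leb(A(\q))\leq (2\psi(\abs{\q}))^m$. Re-indexing by $q=\abs{\q}$ and using $\#\set{\q\in\ZZ^n : \abs{\q}=q}\asymp q^{n-1}$ converts $\sum_{\q\neq 0}\Leb(A(\q))$ into a constant times $\sum_{q=1}^\infty q^{n-1}\psi(q)^m$, so the convergence hypothesis and Borel--Cantelli give $\Leb(\limsup A(\q))=0$.

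For divergence, set $W=\limsup_\q A(\q)$. The plan is to apply the divergence form of the Borel--Cantelli lemma, which requires the quasi-independence estimate
\[
\sum_{\abs{\q},\abs{\q'}\leq N}\Leb(A(\q)\cap A(\q')) \leq C\parens*{\sum_{\abs{\q}\leq N}\Leb(A(\q))}^2
\]
for all large $N$. Together with $\sum_\q\Leb(A(\q))=\infty$ (immediate from the divergence hypothesis and the measure computation above) this produces $\Leb(W)\geq 1/C>0$, which the linear forms zero-one law of Vilchinski (the generalization of Gallagher's) then upgrades to $\Leb(W)=1$. The key enabling fact is that when $\q,\q'\in\ZZ^n$ are linearly independent over $\QQ$, the map $\x\mapsto(\q\x,\q'\x)$ is a surjective continuous group homomorphism $\TT^{nm}\to\TT^{2m}$, so its push-forward of Lebesgue is Lebesgue, giving the \emph{exact} factorization $\Leb(A(\q)\cap A(\q'))=\Leb(A(\q))\Leb(A(\q'))$ for such pairs.

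The main obstacle is bounding the diagonal contribution from parallel pairs $\q=c\q_0$, $\q'=c'\q_0$ along a common primitive direction $\q_0\in\ZZ^n$. Along each such fiber the problem reduces to a one-dimensional approximation question about $\q_0\x\in\TT^m$ with rate $c\mapsto\psi(c\abs{\q_0})$, and the monotonicity hypothesis on $\psi$ is precisely what is needed to control the resulting overlaps via classical estimates in the style of the proof of Khintchine's theorem. Summing these bounds over primitive directions $\q_0$ and comparing against the total divergent sum closes the quasi-independence check, completing the argument.
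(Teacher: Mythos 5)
The paper does not prove the Khintchine--Groshev theorem; it states it as classical background and cites \cite{Groshev} (see also Sprindzuk \cite{Sprindzuk}). So there is no ``paper's own proof'' to compare against. What I can say is how your outline relates to the machinery the paper develops for its actual main result, Theorem~\ref{thm:multiv}, since the two are structurally parallel.

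Your decomposition into linearly independent pairs (which factor exactly) and parallel pairs along a primitive direction, with a projection $T_{\q_0}$ reducing the latter to a one-dimensional problem, is precisely the skeleton of the paper's argument: the independence fact is Lemma~\ref{lem:indA} (proved via the Fourier computation in Lemma~\ref{lem:indE}), and the fiberwise reduction is Lemma~\ref{lem:projection}. That part of your proposal is sound (including the push-forward-of-Haar justification, which is equivalent to the paper's Fourier argument).

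The gap is in the last paragraph. Bounding the parallel contribution fiber by fiber does not close the argument on its own. If you apply a one-dimensional quasi-independence bound to each fiber separately, you end up with something of the shape
\[
\sum_{\q_0}\Bigl(\sum_{c}\Leb\bigl(A(c\q_0)\bigr)\Bigr)^{2} \;+\; \sum_{\q}\Leb\bigl(A(\q)\bigr),
\]
and the first sum is a sum of squares of fiber totals, which is not in general comparable to the square of the grand total $\bigl(\sum_{\q}\Leb(A(\q))\bigr)^{2}$. Moreover, an individual fiber sum $\sum_c \psi(c|\q_0|)^m$ may well converge even when $\sum_q q^{n-1}\psi(q)^m$ diverges (e.g.\ already for $\psi(q)\asymp q^{-n/m}$ with $m\geq 2$), so there is no divergent one-dimensional problem on a single fiber to which the ``classical estimates in the style of Khintchine'' can be applied. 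This is exactly the difficulty the paper resolves for Theorem~\ref{thm:multiv} by \emph{aggregating} the fibers: it introduces the single radial function $\Psi(d)=\bigl(\sum_{\q_0}\psi(d\q_0)^m\bigr)^{1/m}$, uses the dilation Lemma~\ref{lem:dilation} to transfer all fiber overlaps into overlaps of $A_{1,m}''(k,\Psi(k))\cap A_{1,m}''(\ell,\Psi(\ell))$, and only then invokes the one-dimensional overlap machinery (Propositions~\ref{lem:PVoverlaps} and~\ref{prop:KMVP}). Your plan needs an analogous aggregation step; without it, the ``summing these bounds over primitive directions'' sentence does not produce the required bound. (Monotonicity is certainly the right hypothesis to exploit in the Khintchine--Groshev setting, and a correct proof along these lines exists, but it must handle the cross-fiber interaction; a purely fiber-local argument is not enough.)
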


As before,~(\ref{eq:KGdiv}) should be viewed as a measure sum. Here,
we can take the sets $A(q)$ to be all $\x\in[0,1]^{nm}$
satisfying~(\ref{eq:6}) with some $\q$ whose norm is $\abs{\q}=q$. The
measure of $A(q)$ is comparable to $q^{n-1}\psi(q)^m$ once 
$\psi(q)$ is small enough. Hence, the Borel--Cantelli lemma takes care
of the convergence part of the theorem, and one must work harder for
the divergence part.

Again, the question of whether monotonicity can be removed
arises. Sprindzuk showed in~\cite[Theorem~12]{Sprindzuk} that it can
be removed if $n>2$, leaving open the cases where $n=2$. (Recall that
the cases where $n=1$ and $m>1$ were handled by
Gallagher~\cite{Gallagher01} and that Duffin--Schaeffer showed that
monotonicity is required in the case $m=n=1$.) The question was
answered in full by Beresnevich and
Velani~\cite[Theorem~1]{BVKG}. They showed that one can remove the
monotonicity assumption from the Khintchine--Groshev theorem whenever
$nm>1$.

\subsection{Multivariate theory}

Now that $\q$ ranges through $\ZZ^n$, there is a more general setup
that one can consider. Namely, one should allow for $\psi$ to be
multivariate, meaning it depends on $\q$ rather than just
$\abs{\q}$. In this setting, there is the following theorem of
Beresnevich and Velani~\cite[Theorem~5]{BVKG}, which we view as a
multivariate analogue of Gallagher's version of Khintchine's theorem
without monotonicity~\cite{Gallagher01}.

\begin{theorem*}[Beresnevich--Velani, 2010]
  Let $m>1$. If $\psi: \ZZ^n\to\RR_{\geq 0}$ is a function such that
  \begin{equation}\label{eq:BVdiv}
    \sum_{\q\in\ZZ^n} \psi(\q)^m = \infty,
  \end{equation}
  then for Lebesgue-almost every
  $\x\in\operatorname{Mat}_{n\times m}(\RR)$ there are infinitely many
  $(\p,\q)\in\ZZ^m\times\ZZ^n$ such that~(\ref{eq:1}) holds. 
\end{theorem*}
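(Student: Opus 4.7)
The convergence direction is immediate from Borel--Cantelli, so I focus on divergence. For nonzero $\q\in\ZZ^n$, let
\[
A(\q)=\set*{\x\in[0,1]^{nm} : \abs{\q\x-\p}<\psi(\q)\text{ for some }\p\in\ZZ^m}.
\]
The map $\x\mapsto\q\x\bmod 1$ is a surjective Haar-preserving homomorphism $\TT^{nm}\to\TT^m$ whenever $\q\neq 0$, so after truncating $\psi$ at $1/2$ (which affects neither the hypothesis nor the conclusion) one has $\Leb(A(\q))=(2\psi(\q))^m$. Writing each nonzero $\q$ uniquely as $a\q_0$ with $a\in\NN$ and primitive $\q_0\in\ZZ^n$ (with a fixed sign convention) and setting $T_{\q_0}:=\sum_{a\in\NN}\psi(a\q_0)^m$, the divergence hypothesis becomes $\sum_{\q_0}T_{\q_0}=\infty$, and the goal is $\Leb(\limsup_\q A(\q))=1$.

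The plan is to split into two cases. \emph{Case A}: $T_{\q_0}=\infty$ for some primitive $\q_0$. I pull back along the Haar-preserving homomorphism $\phi_{\q_0}:\TT^{nm}\to\TT^m$, $\x\mapsto\q_0\x$, under which $A(a\q_0)$ becomes the simultaneous $\psi(a\q_0)$-approximation set in $\TT^m$. The $n=1, m>1$ case of the present theorem---Gallagher's theorem for simultaneous approximation without monotonicity---gives that this simultaneous limsup has full measure in $\TT^m$, and this transfers back to $\TT^{nm}$.

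\emph{Case B}: $T_{\q_0}<\infty$ for every primitive $\q_0$; this forces $n\geq 2$, since $n=1$ admits only one primitive direction and would put us in Case A. Set $N_{\q_0}(\x)=\#\set{a\in\NN:\x\in A(a\q_0)}$ and $F(\q_0)=\set{N_{\q_0}\geq 1}$. The key structural observation is pairwise independence across directions: if $\q_0\neq\pm\q_0'$ are primitive in $\ZZ^n$ with $n\geq 2$, they are $\mathbb{Q}$-linearly independent, so the combined map $(\phi_{\q_0},\phi_{\q_0'}):\TT^{nm}\to\TT^{2m}$ is a surjective Haar-preserving homomorphism, making $F(\q_0)$ and $F(\q_0')$ independent events. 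A Paley--Zygmund bound on $N_{\q_0}$ then yields $\Leb(F(\q_0))\gtrsim\min(1,T_{\q_0})$, provided the second-moment estimate
\[
\sum_{a,a'\in\NN}\Leb(A(a\q_0)\cap A(a'\q_0))\ll T_{\q_0}^2+T_{\q_0}
\]
holds; pulled back via $\phi_{\q_0}$ to $\TT^m$, this is exactly the quasi-independence estimate underlying Gallagher's theorem. Since $\sum_{\q_0}\min(1,T_{\q_0})=\infty$ whenever $\sum_{\q_0}T_{\q_0}=\infty$, we get $\sum_{\q_0}\Leb(F(\q_0))=\infty$, and the second Borel--Cantelli lemma for pairwise independent events yields $\x\in F(\q_0)$ for infinitely many $\q_0$ on a full-measure set, whence $\x\in A(\q)$ for $\q$ in infinitely many directions.

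The main obstacle is Case B: one must simultaneously exploit the pairwise independence across primitive directions (which crucially uses $n\geq 2$ and the fact that linearly independent integer vectors induce independent torus coordinates) and the within-direction quasi-independence that underlies Gallagher's 1D simultaneous estimate. Both ingredients fit cleanly because each line $\ZZ\q_0$ pulls back through a Haar-preserving homomorphism to exactly the one-variable setup in $\TT^m$, so Gallagher's overlap estimate applies verbatim on each line.
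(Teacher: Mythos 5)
The paper does not prove this statement; it cites it as~\cite[Theorem~5]{BVKG}, so there is no internal proof to compare against directly. That said, your argument mirrors the strategy the paper uses to prove its main Theorem~\ref{thm:multiv}: linearly independent $\q_1,\q_2\in\ZZ^n$ give \emph{exactly} independent approximation sets (the paper's Lemma~\ref{lem:indE}/Lemma~\ref{lem:indA}, established by the same Fourier computation you sketch), and interactions along a single rational line $\ZZ\q_0$ pull back through the Haar-preserving map $\x\mapsto\q_0\x$ (the paper's Lemma~\ref{lem:projection}) to the $n=1$ setting on $\TT^m$, where the hard overlap estimate lives. Your packaging is genuinely different, though: instead of the paper's single Chung--Erd{\H o}s argument over a window $X\leq d\leq Y$ with cross-direction pairs absorbed by independence and same-direction pairs controlled by Proposition~\ref{prop:KMVP}, followed by the Beresnevich--Velani zero-one law, you split into Case~A (some line has $T_{\q_0}=\infty$; pull back and quote Gallagher) and Case~B (all $T_{\q_0}$ finite; Paley--Zygmund on each line, then the pairwise-independent second Borel--Cantelli lemma across the independent events $F(\q_0)$). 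The line of argument is sound, and your route is more elementary in the same-direction step because the non-coprime $m>1$ case needs only Gallagher's quasi-independence rather than Pollington--Vaughan or Koukoulopoulos--Maynard.

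Two small things to tighten. First, Case~B needs more than Gallagher's theorem as a black box: it needs the specific second-moment inequality
\begin{equation*}
\sum_{a,a'\in\NN}\Leb\parens*{A_{1,m}(a,\psi_a)\cap A_{1,m}(a',\psi_{a'})}\ll T^2+T,\qquad T=\sum_a\psi_a^m,
\end{equation*}
uniformly for arbitrary $\psi:\NN\to[0,1/2]$ and $m\geq 2$, which is indeed what the proof of Gallagher's theorem supplies (his overlap lemma gives the off-diagonal sum $\ll T^2$), but you should cite the lemma rather than the theorem. Second, the parenthetical ``truncating $\psi$ at $1/2$ affects neither hypothesis nor conclusion'' deserves a sentence: if $\psi(\q)\geq 1/2$ for infinitely many $\q$ one is done immediately since each such $A(\q)$ is full measure, and otherwise only finitely many terms change, so both the divergence hypothesis and the $\limsup$ are preserved.
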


\begin{remark*}
  This improved on another result of
  Sprindzuk~\cite[Theorem~14]{Sprindzuk}, where the same was proved
  for functions $\psi$ supported on the primitive vectors of $\ZZ^n$.
\end{remark*}

As with the other theorems,~(\ref{eq:BVdiv}) should be viewed as a
measure sum. Here, the relevant sets $A(\q)$ consist of
$\x\in[0,1]^{nm}$ satisfying~(\ref{eq:1}).

The above theorem is equivalent to the $m>1$ cases of
Theorem~\ref{thm:catlin} (see~\cite{BBDV}). Indeed, when $m>1$ we have
that $\Phi_m(\q)$ is comparable to $\abs{\q}^m$. Armed with this fact,
one can quickly show that Beresnevich and Velani's theorem implies the
$m>1$ cases of Theorem~\ref{thm:catlin}. (Similarly, Gallagher's
non-monotonic Khintchine theorem implies a higher-dimensional
``simultaneous'' Catlin conjecture.) However, an example like the one
constructed by Duffin and Schaeffer shows that Beresnevich and
Velani's theorem cannot possibly hold for $m=1$, and so the dual
Catlin conjecture cannot be handled in the same way. We prove it here
as a corollary to the dual Duffin--Schaeffer conjecture
(Theorem~\ref{thm:multiv} with $m=1$), whose coprimality
requirement~(\ref{eq:coprimality}) avoids the issue caused by the
(dual) Duffin--Schaeffer counterexample.

Regarding the Duffin--Schaeffer conjecture for systems of linear forms
(the full Theorem~\ref{thm:multiv}), the result is new in all cases
where $n>1$. Its relationship to the $n>1$ cases of the
Khintchine--Groshev theorem, the generalized version of Catlin's
conjecture (Theorem~\ref{thm:catlin}), and Beresnevich \& Velani's
theorem is the same as the relationship between the $n=1$ cases of
Theorem~\ref{thm:multiv} (the classical Duffin--Schaeffer conjecture
and its $m$-dimensional analogue) and the $n=1$ versions of the
aformentioned theorems (specifically, Khintchine's theorem, Catlin's
conjecture and its ``simultaneous'' analogues, and Gallagher's
non-monotonic Khintchine theorem), in terms of the implications and
non-implications between them.

\subsection{Idea of the proof}
\label{sec:idea-proof}

The general goal of the proof of Theorem~\ref{thm:multiv} is one that
pervades this field. In order to show that a limsup set is large, one
must show that the corresponding sequence of sets does not overlap
itself too much. That is, we must ensure that the measure
sum~(\ref{eq:maindiv}) is not overly redundant. In this work, we will
find that the geometric properties of our approximation sets allow us
to control many of these overlaps, and for the ones that cause
difficulties, we reframe the problem in a way that allows us to apply
overlap estimates that were achieved in the work of Koukoulopoulos \&
Maynard~\cite{KMDS} and Pollington \&
Vaughan~\cite{PollingtonVaughan,PVbordeaux}.

To say a bit more, recall that the second Borel--Cantelli lemma offers
us one way to turn control of overlaps into a full-measure statement:
For a sequence of pairwise independent sets having diverging measure
sum, the limsup set has full measure. As it turns out, this is not
directly helpful for proving the kinds of theorems we are after,
because the sets involved are not pairwise independent. For example,
the sets $A(q)$ appearing above in our short discussion of
Khintchine's theorem are not pairwise independent. But in the case of
Khintchine's theorem, the fact that $\psi$ is monotonic allows one to
show that they do enjoy a type of ``average'' independence, where
instead of $\Leb(A(q)\cap A(r))$ being bounded by (a constant times)
$\Leb(A(q))\Leb(A(r))$, one has the bound on average over large sets
of pairs $(q,r)$. This turns out to be enough to prove Khintchine's
theorem, after application of a zero-one law.

For the sets $A_{1,m}'(q)$ appearing in the $m$-dimensional
Duffin--Schaeffer conjecture, the situation is harder, in part because
$\psi$ is arbitrary. Nevertheless, Pollington and Vaughan achieved an
averaged pairwise independence in the $m>1$ cases and, in a momentous
breakthrough, Koukoulopoulos and Maynard managed it in the $m=1$
setting. In the end, our proof depends crucially on those
accomplishments. The specific parts of them that we avail ourselves of
are encapsulated in Section~\ref{sec:pollingtonvaughan},
Proposition~\ref{prop:KMVP}.

In the setting of systems of linear forms, we denote the relevant sets
by $A_{n,m}'(\q)$ (to be formally introduced in
Section~\ref{sec:notation}). We pursue the same goal of showing that
they enjoy this kind of averaged independence. The sets $A_{n,m}'(\q)$
are different in shape from the sets $A_{1,m}'(q)$. The latter are
unions of metric balls, whereas if $n>1$ then $A_{n,m}'(\q)$ is a
union of ``stripes'' through $[0,1]^{nm}$. One effect of this is that
when $\q$ and $\br$ are linearly independent, the sets $A_{n,m}'(\q)$
and $A_{n,m}'(\br)$ are independent (Section~\ref{sec:lemmata},
Lemma~\ref{lem:indA}). This means that when computing the average
measures of intersections, the contributions from linearly independent
pairs $(\q,\br)$ are manageable, and we only have to control the
contributions from dependent pairs. The idea for dependent pairs is to
show that for $1\leq k< \ell$, the pairwise intersections between sets
from the collection $\set*{A_{n,m}'(k\q)}_{\gcd(\q)=1}$ and sets from
the collection $\set*{A_{n,m}'(\ell\q)}_{\gcd(\q)=1}$ have a
cumulative contribution to the average which is comparable to the
contribution of $A_{1,m}'(k)\cap A_{1,m}'(\ell)$ in the $(1,m)$
setting, with an approximating function $\Psi:\NN\to\RR_{\geq 0}$
defined by $\Psi(d) = \parens*{\sum_{\gcd(\q)=1}\psi(d\q)^m}^{1/m}$
(Section~\ref{sec:proof}, Proof of Theorem~\ref{thm:multiv}). At this
point we are able to apply Proposition~\ref{prop:KMVP}, the
aforementioned results of Koukoulopoulos \& Maynard and Pollington \&
Vaughan. Once all contributions are collected, we find the averaged
pairwise independence that is needed to guarantee that the limsup set
has positive measure. The theorem follows after applying an analogue
of Gallagher's zero-one law due to Beresnevich and Velani
(Section~\ref{sec:pollingtonvaughan}, Theorem~\ref{thm:01}).

\subsection{Hausdorff measure analogues}
\label{sec:hausd-meas-anal}

In~\cite{BVmassTP} Beresnevich and Velani proved the Mass Transference
Principle, a result that allows us to deduce Hausdorff measure
statements about limsup sets for sequences of metric balls from
Lebesgue measure statements for the same. Through it, they showed that
the Duffin--Schaeffer conjecture implies its own Hausdorff measure
analogues. The Mass Transference Principle has wide application. Using
it in combination with Dirichlet's theorem, one can prove the
Jarnik--Besicovitch theorem, and, in combination with Khitchine'
theorem, one obtains Jarnik's theorem.

In~\cite{AllenBeresnevich} Allen and Beresnevich modified the Mass
Transference Principle in a way that allows it to be applied to limsup
sets of sequences of thickened affine subspaces,
confirming~\cite[Conjecture~E]{BBDV}. Using that in combination with
the Khintchine--Groshev theorem, they proved a Hausdorff measure
analogue of Khintchine--Groshev~\cite[Theorem~2]{AllenBeresnevich},
partial versions of which had previously been obtained through other
methods~\cite{BBDV, DickinsonVelaniCrelle}. In~\cite{BBDV}, the
authors point out that their Conjecture~E together with the
Duffin--Schaeffer conjecture for systems of linear forms would yield
Hausdorff measure analogues, and similarly for Catlin's conjecture for
systems of linear forms. Specifically, Theorems~\ref{thm:multiv}
and~\ref{thm:catlin} in this paper, together with the Mass
Transference Principle as extended in~\cite{AllenBeresnevich}, give
the following.

\begin{theorem}[General Duffin--Schaeffer,~{\cite[Conjecture~F]{BBDV}}]
  Let $f$ and $g:r\mapsto g(r):= r^{-m(n-1)}f(r)$ be dimension
  functions such that $r^{-mn}f(r)$ is monotonic. Let
  $\psi:\ZZ^n\to\RR_{\geq 0}$ be a function. Then
  \begin{equation*}
    \mathcal H^f(W'(\psi)) = \mathcal H^f([0,1]^{nm}) \quad\textrm{if}\quad \sum_{\q\in\ZZ^n\setminus\set{0}} \parens*{\frac{\varphi(\gcd(\q))}{\gcd(\q)}\abs{\q}}^m \cdot g\parens*{\frac{\psi(\q)}{\abs{\q}}}= \infty.
  \end{equation*}
\end{theorem}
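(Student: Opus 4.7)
The plan is to derive this Hausdorff measure statement by combining Theorem~\ref{thm:multiv} with the Mass Transference Principle for limsup sets of thickened affine subspaces developed by Allen and Beresnevich in~\cite{AllenBeresnevich}. First, I would recast $W'(\psi)$ as such a limsup set: for each admissible pair $(\p,\q)\in\ZZ^m\times(\ZZ^n\setminus\{0\})$ subject to~(\ref{eq:coprimality}), let $L_{\p,\q}=\{\x\in\operatorname{Mat}_{n\times m}(\RR):\q\x=\p\}$, an affine subspace of codimension $m$ (dimension $m(n-1)$) in the $nm$-dimensional ambient space. The inequality $\abs{\q\x-\p}<\psi(\q)$ describes a sup-norm neighborhood $\Omega(L_{\p,\q},\psi(\q)/\abs{\q})$ of $L_{\p,\q}$, and $W'(\psi)$ is the limsup of these neighborhoods over $(\p,\q)$.

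With $N=nm$ and codimension $m$, the Allen--Beresnevich framework calls for precisely the auxiliary function $g(r)=r^{-(N-m)}f(r)=r^{-m(n-1)}f(r)$ of the statement, and the monotonicity of $r\mapsto r^{-nm}f(r)$ is exactly the regularity condition they impose. The form of their Mass Transference Principle I intend to invoke says that full Lebesgue measure of $\limsup_{(\p,\q)}\Omega(L_{\p,\q},g(\psi(\q)/\abs{\q})^{1/m})$ implies full $\mathcal{H}^f$-measure of $\limsup_{(\p,\q)}\Omega(L_{\p,\q},\psi(\q)/\abs{\q})=W'(\psi)$. So the task reduces to producing that Lebesgue input.

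To that end, I would introduce the auxiliary approximation function $\tilde\psi:\ZZ^n\to\RR_{\geq 0}$ defined by $\tilde\psi(\q)=\abs{\q}\cdot g(\psi(\q)/\abs{\q})^{1/m}$, chosen so that $\tilde\psi(\q)/\abs{\q}$ is exactly the MTP-dictated thickening radius. A direct substitution then gives
\[
\sum_{\q\in\ZZ^n\setminus\{0\}}\left(\frac{\varphi(\gcd(\q))\tilde\psi(\q)}{\gcd(\q)}\right)^m
=\sum_{\q\in\ZZ^n\setminus\{0\}}\left(\frac{\varphi(\gcd(\q))}{\gcd(\q)}\abs{\q}\right)^m g\!\left(\frac{\psi(\q)}{\abs{\q}}\right),
\]
which is the divergent series hypothesized in the statement. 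Applying Theorem~\ref{thm:multiv} to $\tilde\psi$ would therefore yield full Lebesgue measure for $W'(\tilde\psi)$, which is precisely the Lebesgue input the MTP requires, and the argument would conclude.

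I do not anticipate any genuinely Diophantine obstacle here; the essential content comes from Theorem~\ref{thm:multiv} itself. The potentially delicate points are bookkeeping: verifying that the sup-norm neighborhoods $\Omega(L_{\p,\q},\cdot)$ fit the norm conventions of the Allen--Beresnevich framework up to absorbable constants, and that $\tilde\psi$ is well defined (using $g(0)=0$ since $g$ is a dimension function). The main observation driving the argument is the clean matching of the two divergence sums under the single substitution $\tilde\psi(\q)=\abs{\q}g(\psi(\q)/\abs{\q})^{1/m}$.
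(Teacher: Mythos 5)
Your proposal is correct and takes exactly the route the paper indicates: the paper simply asserts that the General Duffin--Schaeffer theorem follows from Theorem~\ref{thm:multiv} combined with the Mass Transference Principle for systems of linear forms from~\cite{AllenBeresnevich}, without spelling out the substitution. Your auxiliary function $\tilde\psi(\q)=\abs{\q}\,g(\psi(\q)/\abs{\q})^{1/m}$ and the identification $\Omega(L_{\p,\q},\tilde\psi(\q)/\abs{\q})=\Omega(L_{\p,\q},g(\psi(\q)/\abs{\q})^{1/m})$ are precisely the details the paper leaves to the reader, and they match the divergence conditions as you claim.
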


\begin{theorem}[General Catlin,~{\cite[Conjecture~G]{BBDV}}]
  Let $f$ and $g:r\mapsto g(r):= r^{-m(n-1)}f(r)$ be dimension
  functions such that $r^{-mn}f(r)$ is monotonic. Let
  $\psi:\ZZ^n\to\RR_{\geq 0}$ bea function. Then
  \begin{equation*}
    \mathcal H^f(W(\psi)) = \mathcal H^f([0,1]^{nm}) \quad\textrm{if}\quad \sum_{\q\in\ZZ^n\setminus\set{0}} \Phi_m(\q)\cdot \sup_{t\in\NN} g\parens*{\frac{\psi(t\q)}{t\abs{\q}}} = \infty,
  \end{equation*}
  with $\Phi_m$ defined as in Theorem~\ref{thm:catlin}.
\end{theorem}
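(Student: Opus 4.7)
The plan is to deduce this result from Theorem~\ref{thm:catlin} together with the Mass Transference Principle for thickened affine subspaces of Allen and Beresnevich~\cite{AllenBeresnevich}, following the strategy sketched in the discussion preceding the statement.

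The first step is geometric. I would observe that $W(\psi)$ is, by construction, the limsup of the $(\psi(\q)/\abs{\q})$-neighbourhoods (in sup-norm) of the affine subspaces
\[
R_{\q,\p} = \set{\x \in \operatorname{Mat}_{n\times m}(\RR) : \q\x = \p},
\]
each of which has codimension $m$ in the ambient $\RR^{nm}$. This places us precisely in the geometric setting of the Allen--Beresnevich MTP: ambient dimension $nm$, affine-subspace dimension $m(n-1)$, codimension $m$, matching the $1/m$ exponent implicit in $g(r) = r^{-m(n-1)}f(r)$.

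Next I would define an auxiliary approximation function $\Psi:\ZZ^n\to\RR_{\geq 0}$ by
\[
\Psi(\q) = \abs{\q}\cdot g\!\left(\frac{\psi(\q)}{\abs{\q}}\right)^{\!1/m}.
\]
For every $t\in\NN$ this gives $(\Psi(t\q)/(t\abs{\q}))^m = g(\psi(t\q)/(t\abs{\q}))$, so the divergence hypothesis of the present theorem becomes
\[
\sum_{\q\in\ZZ^n\setminus\set{0}} \Phi_m(\q) \sup_{t\in\NN}\!\left(\frac{\Psi(t\q)}{t\abs{\q}}\right)^{\!m} = \infty,
\]
which is precisely condition~(\ref{eq:catlindiv}) applied to $\Psi$. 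Theorem~\ref{thm:catlin} then yields $\Leb(W(\Psi)) = \Leb([0,1]^{nm})$, i.e.\ full-measure of the limsup of the inflated neighbourhoods of radii $\Psi(\q)/\abs{\q} = g(\psi(\q)/\abs{\q})^{1/m}$.

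Finally I would apply the Allen--Beresnevich MTP with dimension function $f$. The assumed monotonicity of $r^{-mn}f(r)$ is exactly the regularity hypothesis required by that principle in codimension $m$, and the MTP's scaling deflates the $g(\psi(\q)/\abs{\q})^{1/m}$ neighbourhoods back to the original $(\psi(\q)/\abs{\q})$ neighbourhoods while upgrading Lebesgue to $\mathcal H^f$, giving $\mathcal H^f(W(\psi)) = \mathcal H^f([0,1]^{nm})$, as desired. The main obstacle---more bookkeeping than substance---is checking that the $\sup_{t\in\NN}$ commutes with the transformation $\psi\mapsto \Psi$; this uses the fact that $g$ inherits monotonic behaviour from the hypothesis on $r^{-mn}f(r)$ in a manner compatible with dilations $\q\mapsto t\q$, and one should also verify that the family $\set{R_{\q,\p}}$ meets the local-covering conditions under which the Allen--Beresnevich framework is stated.
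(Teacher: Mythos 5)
This is exactly the paper's intended derivation: the paper states the Hausdorff theorem as an immediate corollary of Theorem~\ref{thm:catlin} together with the Allen--Beresnevich extension of the Mass Transference Principle, without spelling out the verification, and your auxiliary function $\Psi(\q) := \abs{\q}\,g\parens*{\psi(\q)/\abs{\q}}^{1/m}$ is precisely the right one, converting the General Catlin divergence hypothesis into condition~(\ref{eq:catlindiv}) for $\Psi$ while inflating the radii of the codimension-$m$ planes $\set{\x : \q\x=\p}$ from $\psi(\q)/\abs{\q}$ to $g\parens*{\psi(\q)/\abs{\q}}^{1/m}$ as the MTP requires. One small overcaution: there is no subtlety about the supremum commuting, since the identity $\parens*{\Psi(t\q)/(t\abs{\q})}^m = g\parens*{\psi(t\q)/(t\abs{\q})}$ holds for every $t$ individually, so the two suprema agree term by term and no monotonicity of $g$ is needed at that step---the hypothesis that $r^{-mn}f(r)$ is monotonic is used only inside the Allen--Beresnevich principle itself.
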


We refer the reader to~\cite{BVmassTP, AllenBeresnevich, BBDV} for
full discussions of the Mass Transference Principle and its
applications in Diophantine approximation.

\subsection{Contents of the paper}

In Section~\ref{sec:notation} we collect notation, conventions, and
initial reductions. Section~\ref{sec:pollingtonvaughan} contains
results of other authors that are crucial in this and other works on
the topic. In order to streamline the presentation of the proof of
Theorem~\ref{thm:multiv}, we collect several useful lemmas in
Section~\ref{sec:lemmata}. Section~\ref{sec:proof} is the proof of
Theorem~\ref{thm:multiv}. In Section~\ref{sec:proof-theor-refthm:c} we
show how the $m=1$ cases of Theorem~\ref{thm:catlin} follow from
Theorem~\ref{thm:multiv}.

\subsection*{Acknowledgments}
\label{sec:acknowledgements}

I thank Victor Beresnevich and Dimitris Koukoulopoulos for helpful
conversations and advice.

\section{Notation, conventions, and initial reductions}
\label{sec:notation}

Notice that if the divergence condition in Theorem~\ref{thm:multiv} is
met, then there is some orthant in $\ZZ^n$ where it is met, and it
suffices to prove the theorem for a version of $\psi$ which is
supported in that orthant. Moreover, by applying reflections over
coordinate hyperplanes, we may assume without loss of generality that
it is the positive orthant, so that it suffices to prove
Theorem~\ref{thm:multiv} for functions of the form
$\psi:\ZZ_{\geq 0}^n\to \RR_{\geq 0}$.

Since the set of $\x\in\operatorname{Mat}_{n\times m}(\RR)$ for which
there are infinitely many solutions to~(\ref{eq:1}) is periodic, it is
customary to restrict attention to those $\x$ lying in
$[0,1]^{nm}$. The set of such $\x$ is denoted $W(\psi)$. We are
further interested in a subset of $W(\psi)$, namely those
$\x\in[0,1]^{nm}$ for which~(\ref{eq:1}) has infinitely many solutions
which additionally satisfy the coprimality
condition~(\ref{eq:coprimality}). That subset is denoted $W'(\psi)$,
and the conclusion of Theorem~\ref{thm:multiv} is equivalent to
\begin{equation}
  \label{eq:measureone}
  \Leb\parens*{W'(\psi)} = 1.
\end{equation}
This is what we will show.

It is useful to identify $[0,1]^{nm}$ with the $nm$-dimensional torus
$\TT^{nm} = \RR^{nm}/\ZZ^{nm}$. We view its elements as $n\times m$
matrices. For $\eps>0$ and $\q\in\ZZ_{\geq 0}^n$, let
\begin{equation*}
  A_{n,m}'(\q, \eps) = \set*{\x\in\TT^{nm} : \exists \p\in\ZZ^m, \gcd(p_i, \q)=1\, (i=1, \dots, m),\, \abs{\q\x-\p}<\eps}. 
\end{equation*}
Notice that
\begin{equation*}
  \Leb\parens*{A_{n,m}'(\q, \eps)} \leq \parens*{\frac{2\varphi(\gcd(\q))\eps}{\gcd(\q)}}^m.
\end{equation*}
For a function $\psi:\ZZ_{\geq 0}^n\to\RR_{\geq 0}$, denote
\begin{equation*}
  A_{n,m}'(\q) := A_{n,m}'(\q, \psi(\q)) = \set*{\x\in\TT^{nm} : \exists \p\in\ZZ^m  \textrm{ such that~(\ref{eq:1}) and~(\ref{eq:coprimality}) hold}}. 
\end{equation*}
This way,
\begin{equation*}
W'(\psi) = \limsup_{q\to\infty}\bigcup_{\abs{\q} = q}A_{n,m}'(\q)
\end{equation*}
is the set of points lying in infinitely many of the sets
$A_{n,m}'(\q)$, and the sum appearing in Theorem~\ref{thm:multiv}
bounds (half) the sum of their measures.

Let $\PP^n$ denote the (nonzero) primitive vectors in $\ZZ_{\geq 0}^n$. Then the
divergence condition in Theorem~\ref{thm:multiv} can be rephrased
as
\begin{equation}\label{eq:div}
  \sum_{\q\in\PP^n}\sum_{d=1}^\infty \parens*{\frac{\varphi(d)}{d}\psi(d\q)}^m = \infty.
\end{equation}
With this, Theorem~\ref{thm:multiv} can be stated as
\begin{equation*}
  (\ref{eq:div}) \quad\implies\quad (\ref{eq:measureone}). 
\end{equation*}
For $d\geq 1$, let
\begin{equation*}
  \Psi(d) = \parens*{\sum_{\q\in\PP^n} \psi(d\q)^m}^{1/m}.
\end{equation*}
Eventually, we will see that no generality is lost in assuming that
$\Psi(d)$ is always finite.

Finally, we use the Vinogradov asymptotic notation throughout. For
two functions $f(t), g(t)$, the notations $f(t) \ll g(t)$ and
$g(t)\gg f(t)$ are used to mean that there is some constant $c>0$ such
that $f(t) \leq cg(t)$ for all relevant values of $t$. The notation
$f(t)\asymp g(t)$ is used to mean that $f(t)\ll g(t)$ and
$f(t)\gg g(t)$ both hold.  The implicit constants $c>0$ appearing here
depend only on the parameter $m$.

\section{Essential ingredients}
\label{sec:pollingtonvaughan}

The challenging parts of all of the theorems mentioned in
Section~\ref{sec:intro} are the divergence parts. As we have
mentioned, this is because the sets we work with are not pairwise
independent, such as one would need in order to apply the second
Borel--Cantelli lemma to achieve full measure of the limsup set. The
following overlap estimates have appeared
in~\cite{Strauch,ErdosDS,Vaaler,PollingtonVaughan,PVbordeaux,AistleitneretalExtraDivergence,AistleitnerDS,BHHVextraii,HPVextra,KMDS}. In
a sense, they represent the best one can do when it comes to pairwise
independence.

\begin{proposition}[Overlap estimates]\label{lem:PVoverlaps}
  Let $\psi:\NN\to \RR_{\geq 0}$ be such that
  $\Psi(d) \leq  d/(2\varphi(d))$ for every $d\geq 1$. If
  $k\neq\ell$, then
  \begin{equation*}
    \Leb\parens*{A_{1,1}'(k, \Psi(k))\cap A_{1,1}'(\ell, \Psi(\ell))} \ll \bone_{M(k,\ell)\geq \gcd(k,\ell)}\frac{\varphi(k)\Psi(k)}{k}\frac{\varphi(\ell)\Psi(\ell)}{\ell}\prod_{\substack{p\mid k\ell/\gcd(k,\ell)^2 \\ p > M(k,\ell)/\gcd(k,\ell)}}\parens*{1 + \frac{1}{p}},
  \end{equation*}
  where $M(k,\ell) = \max\set*{\ell\Psi(k), k\Psi(\ell)}$. The
  implicit constant is absolute.
\end{proposition}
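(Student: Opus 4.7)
The plan is to reduce the measure estimate to a counting problem in which the factor $\frac{\varphi(k)\Psi(k)}{k}\cdot\frac{\varphi(\ell)\Psi(\ell)}{\ell}$ arises from per-pair overlap bounds while the residual product $\prod(1+1/p)$ emerges from a prime-by-prime count of admissible coprime pairs. First, I would decompose each set into disjoint arcs: writing $I_a := (a/k - \Psi(k)/k,\, a/k + \Psi(k)/k)$ for $1 \leq a \leq k$ with $\gcd(a,k)=1$, the hypothesis $\Psi(k) \leq k/(2\varphi(k))$ ensures the $I_a$ are pairwise disjoint (and similarly for $\ell$, with arcs $J_b$). Then
\[
\Leb\bigl(A_{1,1}'(k, \Psi(k)) \cap A_{1,1}'(\ell, \Psi(\ell))\bigr) = \sum_{(a,b)} \Leb(I_a \cap J_b),
\]
the sum running over reduced pairs. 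A direct computation gives $\Leb(I_a \cap J_b) \leq 2\min(\Psi(k)/k, \Psi(\ell)/\ell)$, with nonemptiness equivalent to $|a\ell - bk| < \ell\Psi(k) + k\Psi(\ell) \leq 2M(k,\ell)$.

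Next, I would parametrize by the offset $c := a\ell - bk$. Setting $g = \gcd(k, \ell)$ and writing $k = gk'$, $\ell = g\ell'$ with $\gcd(k',\ell') = 1$, the integer $c$ is divisible by $g$, so $c = gs$ with $|s| < 2M(k,\ell)/g$. If $s = 0$ then $a\ell' = bk'$, forcing $k' \mid a$ and $\ell' \mid b$, and the coprimality conditions then force $k' = \ell' = 1$, i.e., $k = \ell$, which is ruled out. Hence any admissible $s$ satisfies $|s| \geq 1$, so the intersection is empty unless $M(k,\ell)$ is at least a constant multiple of $g$; after absorbing the constant, this gives the indicator $\bone_{M(k,\ell) \geq \gcd(k,\ell)}$. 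For each admissible $s$, the linear equation $a\ell - bk = gs$ has exactly $g$ solutions in the box $[1,k]\times[1,\ell]$, parametrized as $(a,b) = (a_0(s) + tk',\, b_0(s) + t\ell')$ for $t \in \{0,\ldots, g-1\}$, and what remains is to count those $(a,b)$ satisfying both coprimality conditions.

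The heart of the proof is a prime-by-prime analysis of this count via the Chinese Remainder Theorem. For each prime $p \mid k\ell$ I would compute the $p$-adic density of admissible $t$-residues. When $p \mid g$ and $p \nmid k'\ell'$, both $a \bmod p$ and $b \bmod p$ depend linearly on $t$ with nonzero slope, so the local density is at least $(p-2)/p$, matching the shared-prime Euler factor $(1 - 1/p)^2$ of $\varphi(k)\varphi(\ell)/(k\ell)$ up to a harmless $O(1/p^2)$ correction. When $p \mid k'$, the residue $a \bmod p^{v_p(k)}$ depends only on $s$, not on $t$, so the coprimality condition $p \nmid a$ becomes a congruence restriction on $s$; as $s$ ranges over an interval of length $\asymp M(k,\ell)/g$, this averages to a factor $1 - 1/p$ provided $p \leq M(k,\ell)/g$, but for $p > M(k,\ell)/g$ the averaging fails and the local factor remains $1$ instead of $1 - 1/p$, costing $p/(p-1) \asymp 1 + 1/p$ relative to independence; the same applies symmetrically to primes $p \mid \ell'$. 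Summing $T(s)$ over admissible $s$ and multiplying by the per-pair overlap $2\min(\Psi(k)/k, \Psi(\ell)/\ell) \asymp \Psi(k)\Psi(\ell)/M(k,\ell)$ collapses the $M(k,\ell)$ factors and yields the stated bound. The main obstacle is precisely this prime-by-prime bookkeeping: one must correctly attribute each $(1-1/p)$ to the ambient $\varphi(k)\varphi(\ell)/(k\ell)$ factor, and identify exactly the primes $p \mid k\ell/g^2$ above the threshold $M(k,\ell)/g$ at which the $s$-averaging fails and the residual $1 + 1/p$ must be paid.
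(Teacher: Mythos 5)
The paper's own ``proof'' of Proposition~\ref{lem:PVoverlaps} is simply a citation to Pollington--Vaughan (pp.~195--196) and Harman (Lemma~2.8); it does not reprove the estimate. What you have written is a reconstruction of the argument in those sources, and the overall architecture is the right one: decompose into arcs, reduce to counting reduced pairs $(a,b)$ with $a\ell - bk = gs$, note that $s=0$ is excluded (your coprimality argument forcing $k'=\ell'=1$ is correct), parametrize solutions by $t \in \{0,\dots,g-1\}$, and track which local coprimality conditions average out over $s$ versus which must be paid for as a factor $1+1/p$. The identification of the per-pair overlap as $\asymp \Psi(k)\Psi(\ell)/M$, and of the threshold $p \lessgtr M/g$ as the dividing line, is exactly where the residual Euler product in the statement comes from.

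That said, a few points keep this from being a proof rather than a sketch. First, the disjointness you invoke for the arcs $I_a$ does not follow from $\Psi(k) \le k/(2\varphi(k))$ (which permits $\Psi(k) > 1/2$); fortunately you only need subadditivity, so this is cosmetic but should not be asserted. Second, your case split on primes $p \mid k\ell$ is incomplete: it omits primes $p$ dividing both $g$ and one of $k', \ell'$ (e.g.\ $p=2$, $k=4$, $\ell=2$), for which $a \bmod p$ is frozen in $t$ (depending on $s$) while $b \bmod p$ still varies in $t$ --- a hybrid of your two cases that needs its own treatment. Third, and most importantly, the ``averaging over $s$ gives a factor $1-1/p$ when $p \le M/g$'' step is a density heuristic; the rigorous version requires an inclusion--exclusion/M\"obius argument to control the joint count $\sum_s T(s)$ simultaneously across all primes $p \mid k'\ell'$, which is precisely the content of the computation on pp.~195--196 of Pollington--Vaughan. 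You flag this bookkeeping yourself as ``the main obstacle,'' which is an accurate self-assessment: the plan is sound and is essentially the cited proof, but the hard quantitative step is stated rather than carried out.
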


\begin{proof}
  This is found in~\cite[Pages195--196]{PollingtonVaughan} as well
  as~\cite[Lemma~2.8]{Harman}.
\end{proof}

Much of the effort in the above cited papers goes into controlling the
effect that the factor
\begin{equation*}
  \prod_{\substack{p\mid k\ell/\gcd(k,\ell)^2 \\ p > M(k,\ell)/\gcd(k,\ell)}}\parens*{1 + \frac{1}{p}}
\end{equation*}
can have on the overlaps. In~\cite{PollingtonVaughan,PVbordeaux,KMDS},
Pollington \& Vaughan and Koukoulopoulos \& Maynard manage to control
this effect on average. The following proposition summarizes the
control they achieve.

\begin{proposition}[Koukoulopoulos--Maynard, Pollington--Vaughan]\label{prop:KMVP}
  Let $\Psi:\NN\to [0,1/2]$ be a function. Then for all large $X\geq 1$ and $Y>X$ such that
  \begin{equation*}
    \parens*{\frac{1}{2}}^{m-1} < \sum_{X\leq d \leq Y}\parens*{\frac{\varphi(d)\Psi(d)}{d}}^m < \parens*{\frac{1}{2}}^{m-2},
  \end{equation*}
  one has
  \begin{equation*}
    \sum_{X\leq k < \ell \leq Y}  \Leb\parens*{A_{1,m}'(k,
      \Psi(k))\cap A_{1,m}'(\ell, \Psi(\ell))} \ll 1.
  \end{equation*}
  The implicit constant depends only on $m$.
\end{proposition}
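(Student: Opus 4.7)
The plan is to decompose the problem across coordinates, apply the pointwise overlap estimate of Proposition~\ref{lem:PVoverlaps}, and then invoke the averaged control achieved in the cited works as a black box.

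First, I would observe that the target sets have product structure. Because the coprimality requirement $\gcd(p_i,q)=1$ is imposed separately on each coordinate $i$, the set $A_{1,m}'(q,\eps)\subset\TT^m$ factors as the $m$-fold Cartesian power of $A_{1,1}'(q,\eps)\subset\TT$. Hence
\[
\Leb\bigl(A_{1,m}'(k,\Psi(k))\cap A_{1,m}'(\ell,\Psi(\ell))\bigr) = \Leb\bigl(A_{1,1}'(k,\Psi(k))\cap A_{1,1}'(\ell,\Psi(\ell))\bigr)^{m}.
\]
Applying Proposition~\ref{lem:PVoverlaps} and raising to the $m$-th power (noting $\bone^m=\bone$) yields
\[
\Leb\bigl(A_{1,m}'(k,\Psi(k))\cap A_{1,m}'(\ell,\Psi(\ell))\bigr) \ll \bone_{M(k,\ell)\geq\gcd(k,\ell)}\left(\frac{\varphi(k)\Psi(k)\varphi(\ell)\Psi(\ell)}{k\ell}\right)^{m} P(k,\ell)^{m},
\]
where $P(k,\ell)=\prod_{p\mid k\ell/\gcd(k,\ell)^2,\; p>M(k,\ell)/\gcd(k,\ell)}(1+1/p)$.

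Next, I would reduce the proposition to the estimate
\[
S := \sum_{X\leq k<\ell\leq Y}\left(\frac{\varphi(k)\Psi(k)\varphi(\ell)\Psi(\ell)}{k\ell}\right)^{m} P(k,\ell)^{m}\;\ll\; 1
\]
under the normalization $(1/2)^{m-1} < \sum_{X\leq d\leq Y}(\varphi(d)\Psi(d)/d)^{m} < (1/2)^{m-2}$. If the factor $P(k,\ell)^{m}$ were $O(1)$, one could finish at once by squaring the normalization. The content of the proposition is therefore the averaged control of the Euler product $P(k,\ell)$. In the case $m=1$, this averaged control is exactly what Koukoulopoulos and Maynard established via their GCD-graph machinery in~\cite{KMDS}. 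For $m\geq 2$, the extra power $P^{m}$ supplies enough slack for the elementary stratification of Pollington and Vaughan to succeed: one groups pairs $(k,\ell)$ by the value of $\gcd(k,\ell)$ and the dyadic scale of $M(k,\ell)/\gcd(k,\ell)$, and bounds each stratum by Cauchy--Schwarz together with Mertens-type estimates for the truncated Euler product. In both cases the hypothesis on $\Psi$ is used to ensure the cutoff $M/\gcd$ is nontrivial.

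The main obstacle is thus the factor $P(k,\ell)$: absent it, the claim would be trivial, but on average $P(k,\ell)$ can be large enough to spoil naive bounds. Controlling its contribution is the essential difficulty of the Duffin--Schaeffer problem itself, resolved for $m\geq 2$ by Pollington and Vaughan in 1990 and for $m=1$ by the 2020 breakthrough of Koukoulopoulos and Maynard. My proof amounts to packaging their conclusions into the form stated.
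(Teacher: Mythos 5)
Your proposal is correct and takes essentially the same approach as the paper: both treat this proposition as a repackaging of the averaged overlap control already established by Koukoulopoulos--Maynard for $m=1$ and by Pollington--Vaughan for $m>1$, invoking those results as black boxes after a notational translation. The paper's own proof is shorter --- it simply cites \cite[Proof of Theorem~1 assuming Proposition~5.4]{KMDS} and \cite[Pages~197--199, Cases~(i)--(iii)]{PollingtonVaughan}, \cite[Cases~(a),(b),(d)]{PVbordeaux}, explaining that Pollington--Vaughan work with a finite $Z\subset\NN$ and a parameter $\eta\in(0,1)$ with $\eta/2 < \sum_{d\in Z}\Leb(A_{1,m}(d,\Psi(d))) < \eta$, and that one should take $\eta=2^{2-m}$ and $Z=[X,Y]\cap\ZZ$; your version additionally spells out the internal route (product structure, pointwise overlap estimate, reduction to averaged control of the Euler product $P(k,\ell)^m$). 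That extra detail is accurate at a sketch level, though one should be careful that reducing to the displayed sum $S$ is how PV proceed for $m\geq 2$, whereas Koukoulopoulos--Maynard's treatment of $m=1$ works with a more elaborate quantity than a direct bound on $\sum P(k,\ell)$; since you ultimately cite both works as black boxes, this does not constitute a gap.
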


\begin{proof}
  In the case $m=1$, this is contained in~\cite[Proof of Theorem~1 assuming
  Proposition~5.4]{KMDS}.

  In the cases where $m>1$, it is contained in~\cite[Pages~197--199,
  Cases~(i)--(iii)]{PollingtonVaughan}, and
  in~\cite[Cases~(a),(b),(d)]{PVbordeaux}, but the notation is
  slightly different, so some translation is in order. Pollington and
  Vaughan let $\eta\in(0,1)$ be any fixed parameter, and
  $Z\subset \NN$ a finite set (with large minimum) such that
  \begin{equation*}
    \frac{1}{2}\eta < \sum_{d\in Z}\Leb\parens*{A_{1,m}(d, \Psi(d))} < \eta.
  \end{equation*}
  They show
  \begin{equation*}
    \sum_{ \substack{k,\ell \in Z\\ k < \ell}}  \Leb\parens*{A_{1,m}'(k,
      \Psi(k))\cap A_{1,m}'(\ell, \Psi(\ell))} \ll \sum_{d\in Z}\Leb\parens*{A_{1,m}'(d, \Psi(d))},
  \end{equation*}
  with an implicit constant depending at most on $m$. In our
  proposition's statement, we have taken $\eta = 2^{2-m}$ and the
  range $Z = [X,Y]\cap \ZZ$.
\end{proof}

With this on-average bound of overlaps, Pollington \& Vaughan and
Koukoulopoulos \& Maynard are able to show, through an application of
the Chung--Erd{\H o}s lemma (a version of which is listed below as
Proposition~\ref{lem:erdoschung}), that the associated limsup sets
have positive measure. At this point, Gallagher's zero-one
law~\cite{Gallagher01} and Vilchinski's higher-dimensional
version~\cite{Vilchinski} lead to the proof of the Duffin--Schaeffer
conjecture in dimensions $m\geq 1$.

\

Here is the version of the Chung--Erd{\H o}s lemma that we will
use. Usually, it is stated for sums of finitely many measurable sets,
but it is not hard to show that it also holds for infinitely many sets
having a finite measure sum.

\begin{proposition}[Chung--Erd\H{o}s Lemma]\label{lem:erdoschung}
  If $(X, \mu)$ is a probability space and
  $(A_q)_{q \in \NN}\subset X$ is a sequence of measurable subsets
  such that $0 < \sum_{q} \mu(A_q) < \infty$, then
  \begin{equation*}
    \mu\parens*{\bigcup_{q=1}^\infty  A_q} \geq \frac{\parens*{\sum_{q=1}^\infty \mu(A_q)}^2}{\sum_{q,r=1}^\infty \mu(A_q\cap A_r)}.
  \end{equation*}
\end{proposition}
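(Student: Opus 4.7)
The plan is to truncate the sequence and reduce to the standard (finite-union) Chung--Erd\H{o}s lemma, then pass to the limit $N\to\infty$. First I would invoke the finite version: for each $N\in\NN$, Cauchy--Schwarz applied to the indicator functions $\bone_{\bigcup_{q\leq N} A_q}$ and $\sum_{q\leq N}\bone_{A_q}$ yields (whenever the numerator is nonzero)
\begin{equation*}
  \mu\parens*{\bigcup_{q=1}^N A_q} \geq \frac{\parens*{\sum_{q=1}^N \mu(A_q)}^2}{\sum_{q,r=1}^N \mu(A_q\cap A_r)}.
\end{equation*}
By the assumption $\sum_q\mu(A_q)>0$, the numerator is positive for all sufficiently large $N$, so this is a genuine inequality from some point onward.

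Next I would let $N\to\infty$ on both sides. By continuity of measure from below, the left-hand side increases to $\mu\parens*{\bigcup_{q=1}^\infty A_q}$. The numerator converges to $\parens*{\sum_{q=1}^\infty \mu(A_q)}^2$, finite and strictly positive by hypothesis. The denominator is monotone nondecreasing in $N$ (all summands are nonnegative), so by monotone convergence it tends to $S := \sum_{q,r=1}^\infty \mu(A_q\cap A_r)\in(0,\infty]$. If $S=+\infty$, the asserted inequality reduces to $\mu\parens*{\bigcup A_q}\geq 0$, which is trivial; if $S<\infty$, continuity of division on $(0,\infty)\times(0,\infty)$ lets me take the limit of both sides of the finite-$N$ inequality, yielding the claimed bound.

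There is no substantive obstacle here: the whole argument amounts to truncation plus routine limit passage, and the finite version is itself a one-line consequence of Cauchy--Schwarz. The only bookkeeping is to handle the degenerate cases where the numerator or denominator vanishes or diverges, both of which fall into the trivial branches above.
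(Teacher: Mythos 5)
Your proposal is correct and follows essentially the same path as the paper: truncate to a finite union, apply Cauchy--Schwarz to the indicator functions to obtain the finite Chung--Erd\H{o}s bound, and then pass to the limit using monotonicity of the numerator, denominator, and left-hand side. The only difference is in the bookkeeping of the limit passage (the paper uses an $\eps$-argument on the numerator and a $\limsup$, while you invoke continuity of division and dispatch the $S=\infty$ case separately), which is immaterial.
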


\begin{proof}
  For any $Q>1$, let
  \begin{equation*}
    C_Q = \bigcup_{q=1}^Q A_q.
  \end{equation*}
  Applying the Cauchy--Schwarz inequality, we have
  \begin{align*}
    \parens*{\int_X \sum_{q=1}^Q \bone_{A_q}\,d\mu}^2
    &=\parens*{\int_X \bone_{C_Q}\sum_{q=1}^Q \bone_{A_q}\,d\mu}^2 \\
    &\leq \mu(C_Q) \int_X \parens*{\sum_{q=1}^Q \bone_{A_q}}^2\,d\mu \\
    &= \mu(C_Q)\sum_{q,r=1}^Q \mu(A_q\cap A_r).
  \end{align*}
  Now
  \begin{align*}
    \mu\parens*{\bigcup_{q=1}^\infty A_q}
    &= \lim_{Q\to\infty}\mu(C_Q) \\
    &\geq \limsup_{Q\to\infty}\frac{\parens*{\sum_{q=1}^Q \mu(A_q)}^2}{\sum_{q,r=1}^Q \mu(A_q\cap A_r)}.
  \end{align*}
  For any $\eps>0$, we have
  \begin{align*}
    \parens*{\sum_{q=1}^Q \mu(A_q)}^2 \geq (1-\eps)\parens*{\sum_{q=1}^\infty \mu(A_q)}^2
  \end{align*}
  for all sufficienly large $Q$. Therefore, 
  \begin{align*}
\limsup_{Q\to\infty}\frac{\parens*{\sum_{q=1}^Q \mu(A_q)}^2}{\sum_{q,r=1}^Q \mu(A_q\cap A_r)} \geq (1-\eps)\frac{\parens*{\sum_{q=1}^\infty \mu(A_q)}^2}{\sum_{q,r=1}^\infty \mu(A_q\cap A_r)},
  \end{align*}
  and the result follows since $\eps>0$ was arbitrary.
\end{proof}

Finally, we will need a zero-one law that holds in the setting of
approximation of systems of linear forms. This is provided by
Beresnevich and Velani~\cite[Theorem~1]{BVzeroone}.

\begin{theorem}[Beresnevich--Velani]\label{thm:01}
  For any $n, m$, and $\psi$ we have $\Leb(W'(\psi))\in\set{0,1}$.
\end{theorem}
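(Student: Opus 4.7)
My plan is to deduce the zero-one law from the invariance of $W'(\psi)$, modulo Lebesgue null sets, under a dense subgroup of translations of $\TT^{nm}$. Once such invariance is in place, a standard Fourier-analytic argument on the compact abelian group $\TT^{nm}$, or equivalently a Lebesgue density argument, forces any measurable invariant set to have Lebesgue measure $0$ or $1$. The natural choice of dense subgroup is the rational matrices $Y \in \operatorname{Mat}_{n\times m}(\mathbb{Q})$.

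First I would exploit the tail character of the limsup $W'(\psi) = \bigcap_{Q} \bigcup_{|\q|>Q} A'_{n,m}(\q)$: removing any finite subcollection of the defining sets leaves $W'(\psi)$ unchanged, so I may freely focus on witnesses $(\p,\q)$ with $\gcd(\q)$ arbitrarily large. Next, given $Y \in \operatorname{Mat}_{n\times m}(\mathbb{Q})$ with common denominator $N$, I would restrict attention to those $\q$ with $N^2 \mid \gcd(\q)$. A short calculation shows that for such $\q$, the vector $\q Y$ is integral and every prime dividing $\gcd(\q)$ also divides each component of $\q Y$. Hence if $(\p,\q)$ witnesses $\x \in W'(\psi)$, then $(\p + \q Y, \q)$ witnesses $\x+Y$: the norm inequality is preserved verbatim, and the coprimality condition $\gcd(p_i,\q)=1$ survives because for any prime $p \mid \gcd(\q)$ we have $p \mid (\q Y)_i$, forcing $p \nmid p_i + (\q Y)_i$ exactly when $p \nmid p_i$.

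This yields translation-invariance under $Y$ of the sparse limsup
\begin{equation*}
  W'_N(\psi) := \bigcap_Q \bigcup_{\substack{|\q|>Q\\ N^2 \mid \gcd(\q)}} A'_{n,m}(\q).
\end{equation*}
To upgrade this to $Y$-invariance of $W'(\psi)$ itself, I would argue that $W'(\psi) \setminus W'_N(\psi)$ is a null set, e.g., via a direct Borel--Cantelli comparison using the upper bound $\Leb(A'_{n,m}(\q)) \leq \parens*{2\varphi(\gcd(\q))\psi(\q)/\gcd(\q)}^m$ on the contribution from $\q$ with $N^2 \nmid \gcd(\q)$, or by letting $N$ vary and assembling a family of sparse invariant approximations whose union exhausts $W'(\psi)$ in measure. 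Varying $Y$ then produces invariance of $W'(\psi)$ under a dense subgroup of $\TT^{nm}$ and closes the argument.

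The main obstacle I expect lies in the bridging step just mentioned: the restriction $N^2 \mid \gcd(\q)$ is quite sparse, and it is not a priori obvious that $W'_N(\psi)$ exhausts $W'(\psi)$ in measure for arbitrary $\psi$. If the direct Borel--Cantelli comparison fails, one would likely fall back on a Fourier-analytic computation on $\TT^{nm}$, using the explicit product-of-strips geometry of $A'_{n,m}(\q)$ (together with the independence of $A'_{n,m}(\q)$ and $A'_{n,m}(\br)$ for linearly independent $\q,\br$ noted in Lemma~\ref{lem:indA}) to show that every nontrivial Fourier mode of $\bone_{W'(\psi)}$ vanishes.
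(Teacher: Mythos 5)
The paper does not prove this theorem: it is imported directly from Beresnevich and Velani, \emph{A note on zero-one laws in metrical Diophantine approximation}, Acta Arith.\ 133 (2008), Theorem~1. So there is no ``paper's proof'' to compare against; what I can do is assess your attempted proof on its own terms.

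Your local computation is correct: if $Y=Z/N$ with $Z$ integral and $N^2\mid\gcd(\q)$, then $\q Y$ is an integer vector each of whose components is divisible by every prime dividing $\gcd(\q)$, so both the inequality $\abs{\q\x-\p}<\psi(\q)$ and the coprimality condition $\gcd(p_i,\q)=1$ are preserved under $\x\mapsto\x+Y$, $\p\mapsto\p+\q Y$. But the bridging step — showing $W'(\psi)\setminus W'_N(\psi)$ is null — is not a technicality; it fails outright. Take $\psi$ supported on primitive vectors, i.e.\ $\gcd(\q)=1$ whenever $\psi(\q)>0$. Then $W'_N(\psi)=\emptyset$ for every $N>1$, yet $W'(\psi)$ can have full measure (e.g.\ $\psi(\q)=\abs{\q}^{-n}$ on primitives). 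No Borel--Cantelli comparison and no assemblage over varying $N$ can exhaust $W'(\psi)$ here, since the entire proposed exhausting family is empty once $N>1$. A related slip occurs earlier: the tail character of the limsup allows you to discard finitely many $\q$, which forces $\abs{\q}$ to be large, but does not let you ``focus on witnesses with $\gcd(\q)$ arbitrarily large''—primitive $\q$ persist in every tail. The fallback you gesture at (Fourier modes vanishing via the independence of $A'_{n,m}(\q)$ and $A'_{n,m}(\br)$ for independent $\q,\br$) is a restatement of the goal rather than a method: that pairwise independence says nothing directly about $\widehat{\bone_{W'(\psi)}}$.

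The structural point is that the coprimality condition is compatible with a \emph{multiplicative} symmetry of the limsup set, not an additive one. Gallagher's original one-dimensional proof, and the Beresnevich--Velani generalization the paper cites, exploit dilation-type invariance (roughly $\x\mapsto t\x$ for integers $t$ coprime to the relevant denominators) together with an ergodicity lemma, precisely because translation by rationals destroys the coprimality constraint on all but a measure-zero-in-the-index-set collection of $\q$. Your argument would need to be rebuilt around that multiplicative symmetry to have a chance of closing.
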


\section{Lemmas}
\label{sec:lemmata}

This section is a collection of the lemmas we will need for the proof
of Theorem~\ref{thm:multiv}.

\begin{lemma}\label{lem:indE}
  For $\eps>0$, $\q\in\ZZ^n$, and $\bv\in\RR^n$, let
  \begin{equation*}
    E(\q,\bv,\eps) = \pi\parens*{\set*{\x\in \RR^n : \abs*{\q\cdot\x} < \eps} + \bv},
  \end{equation*}
  where $\pi:\RR^n \to \TT^n$ is the natural quotient map. If
  $\q_1,\q_2\in\ZZ^n$ are linearly independent, then
  \begin{equation*}
    \Leb\parens*{E(\q_1,\bv_1,\eps_1)\cap E(\q_2,\bv_2, \eps_2)} = \Leb\parens*{E(\q_1,\bv_1, \eps_1)}\Leb\parens*{E(\q_2,\bv_2, \eps_2)},
  \end{equation*}
  for all $\eps_1,\eps_2>0$ and $\bv_1, \bv_2\in\RR^n$.
\end{lemma}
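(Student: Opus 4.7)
The plan is to reduce the statement to a one-line Fourier computation on $\TT^n$. First I would observe that, up to a null set, the set $E(\q,\bv,\eps)$ is the preimage $f_{\q}^{-1}(I_{\q,\bv,\eps})$, where $f_{\q}:\TT^n\to\TT$ denotes the well-defined group homomorphism $\x\mapsto \q\cdot\x\bmod 1$ (well-defined since $\q\in\ZZ^n$) and $I_{\q,\bv,\eps}\subset\TT$ is an arc of length $\min\{2\eps,1\}$ centered at $\q\cdot\bv\bmod 1$. Letting $\phi_i$ denote the indicator of $I_{\q_i,\bv_i,\eps_i}$ on $\TT$ and expanding it as a Fourier series $\phi_i(t)=\sum_{k\in\ZZ}\widehat\phi_i(k)\,e^{2\pi i k t}$, I obtain the almost-everywhere identity
\begin{equation*}
  \bone_{E(\q_i,\bv_i,\eps_i)}(\x)=\sum_{k\in\ZZ}\widehat\phi_i(k)\,e^{2\pi i k \q_i\cdot\x}
\end{equation*}
on $\TT^n$; in particular, the Fourier expansion of $\bone_{E(\q_i,\bv_i,\eps_i)}$ is supported on the rank-one sublattice $\ZZ\q_i\subset\ZZ^n$.

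I would then multiply the two expansions and integrate over $\TT^n$, using orthogonality of the characters $\x\mapsto e^{2\pi i \mathbf{k}\cdot\x}$ to reduce to those frequencies that sum to zero:
\begin{equation*}
  \Leb\!\bigl(E(\q_1,\bv_1,\eps_1)\cap E(\q_2,\bv_2,\eps_2)\bigr)=\sum_{\substack{k_1,k_2\in\ZZ\\ k_1\q_1+k_2\q_2=\mathbf{0}}}\widehat\phi_1(k_1)\,\widehat\phi_2(k_2).
\end{equation*}
Linear independence of $\q_1$ and $\q_2$ forces $(k_1,k_2)=(0,0)$ as the only integer solution, so the sum collapses to $\widehat\phi_1(0)\,\widehat\phi_2(0)=\Leb(I_{\q_1,\bv_1,\eps_1})\,\Leb(I_{\q_2,\bv_2,\eps_2})$. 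The same computation applied to a single factor (using $\q_i\neq \mathbf{0}$) gives $\Leb(E(\q_i,\bv_i,\eps_i))=\Leb(I_{\q_i,\bv_i,\eps_i})$, so the two products agree and the lemma follows.

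There is really no obstacle here: the full content of the lemma is the fact that two linearly independent integer vectors admit no nontrivial integer relation, and orthogonality of the toral characters takes care of the rest. A reader who prefers a geometric argument could alternatively push Lebesgue measure through the linear map $T:\TT^n\to\TT^2$, $\x\mapsto(\q_1\cdot\x,\q_2\cdot\x)$, but this requires bookkeeping for the (potentially nontrivial) index of the image lattice $T(\ZZ^n)\subset\ZZ^2$; the Fourier route bypasses this entirely.
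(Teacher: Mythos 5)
Your argument is the same one the paper gives: expand the indicators of the two slabs as Fourier series supported on the rank-one sublattices $\ZZ\q_1$ and $\ZZ\q_2$, integrate, and use orthogonality plus the fact that $n_1\q_1+n_2\q_2=\mathbf{0}$ has no nonzero integer solutions. The only stylistic difference is that the paper first translates so that $\bv_1=\bv_2=\mathbf{0}$, whereas you absorb the centers into the Fourier coefficients of the arcs; both are fine.

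One small point, which is present equally in the paper's write-up: when $\gcd(\q)>1$, the set $E(\q,\bv,\eps)$ is \emph{not} the preimage $f_\q^{-1}(I_{\q,\bv,\eps})$ (even up to a null set), and its measure is $\min\set{2\eps/\gcd(\q),1}$ rather than $\min\set{2\eps,1}$. For instance with $n=2$, $\q=(2,0)$, $\bv=\mathbf{0}$, $\eps=0.1$, the set $E(\q,\mathbf{0},\eps)$ is the stripe $\pi\parens*{(-0.05,0.05)}\times\TT$ of measure $0.1$, whereas $f_\q^{-1}(I)$ additionally contains $\pi\parens*{(0.45,0.55)}\times\TT$ and has measure $0.2$. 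Since this same factor of $\gcd(\q_i)$ is silently introduced on both sides of the equality you compute, the conclusion of the lemma still comes out right, but the intermediate identity $\bone_{E(\q_i,\bv_i,\eps_i)}=\phi_i\circ f_{\q_i}$ is false as stated. The clean fix is to write $\q_i=g_i\q_i'$ with $\q_i'$ primitive and note $E(\q_i,\bv_i,\eps_i)=E(\q_i',\bv_i,\eps_i/g_i)$; for primitive $\q_i'$ the identification $E=f_{\q_i'}^{-1}(I)$ is correct, and the Fourier computation then goes through exactly as you and the paper both describe.
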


\begin{proof}
  First, note that if $\eps_1\geq 1/2$ or $\eps_2\geq 1/2$, then at
  least one of $E(\q_1,\bv_1,\eps_1)$ and $E(\q_2,\bv_2,\eps_2)$ is
  all of $\TT^n$ and the lemma will clearly hold. So we will assume
  that $0< \eps_1, \eps_2 < 1/2$.

  Next, note that the vectors $\bv_1, \bv_2$ do not increase the
  generality, because the set
  \begin{equation*}
    E(\q_1,\bv_1,\eps_1)\cap E(\q_2,\bv_2, \eps_2)
  \end{equation*}
  is a translate of
  \begin{equation*}
    E(\q_1,0 ,\eps_1)\cap E(\q_2,0 , \eps_2)
  \end{equation*}
  by a solution $\x_0$ to $\q_1\cdot(\x_0 - \bv_1) = \q_2\cdot(\x_0 - \bv_2)$. Such
  a solution $\x_0$ is guaranteed to exist because $\q_1$ and $\q_2$
  are linearly independent. So it is enough to treat the case where
  $\bv_1=\bv_2=0$.

  Let $\bone_i$ denote the indicator function of the interval
  $(-\eps_i, \eps_i)$. Then
  \begin{equation*}
    \Leb(E_1\cap E_2)
    = \int_{\TT^n} \bone_1(\q_1\cdot \x)\bone_2(\q_2\cdot \x)\,d\x.
  \end{equation*}
  Expanding $\bone_i$ into a Fourier series
  \begin{equation*}
    \bone_i (x) \sim \sum_{n\in\ZZ} a_i(n) e(nx), 
  \end{equation*}
  where $e(r):=\exp(2\pi i r)$, we have 
  \begin{align*}
    \Leb(E_1\cap E_2)
    &= \int_{\TT^n} \parens*{\sum_{n\in\ZZ} a_1(n) e(n \q_1\cdot\x)}\parens*{\sum_{n\in\ZZ} a_2(n) e(n \q_2\cdot\x)}\,d\x \\
    &= \sum_{n_1, n_2\in\ZZ}  a_1(n_1) a_2(n_2) \underbrace{\int_{\TT^n}e((n_1 \q_1+ n_2\q_2)\cdot\x)\,d\x.}_{=0 \textrm{ if } n_1\q_1+n_2\q_2\neq 0}\\ 
    &= a_1(0) a_2(0)\\
    &= \Leb(E_1)\Leb(E_2). 
  \end{align*}
  This is what we had to show.
\end{proof}

\begin{lemma}\label{lem:indA}
  If $\q_1,\q_2\in\ZZ^n$ are linearly independent, then
  \begin{equation*}
    \Leb\parens*{A_{n,m}'(\q_1)\cap A_{n,m}'(\q_2)} = \Leb\parens*{A_{n,m}'(\q_1)}\Leb\parens*{A_{n,m}'(\q_2)},
  \end{equation*}
  that is, the sets $A_{n,m}'(\q_1)$ and $A_{n,m}'(\q_2)$ are independent.
\end{lemma}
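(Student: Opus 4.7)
My plan is to reduce the $nm$-dimensional independence statement to a one-column statement on $\TT^n$, and then handle the latter by a Fourier argument which is essentially the one already used in Lemma~\ref{lem:indE}.

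First, I would observe a column-wise factorization. Identifying $\TT^{nm}$ with $(\TT^n)^m$ by viewing an $n\times m$ matrix $\x$ as the tuple $(\x^{(1)},\dots,\x^{(m)})$ of its columns, the sup-norm inequality $\abs{\q\x-\p}<\psi(\q)$ becomes $\abs{\q\cdot\x^{(j)} - p_j}<\psi(\q)$ for every $j$, and the coprimality conditions $\gcd(p_j,\q)=1$ likewise decouple across $j$. Setting
\[
  B(\q) := \set*{\y\in\TT^n : \exists\, p\in\ZZ,\ \gcd(p,\q)=1,\ \abs{\q\cdot\y - p} < \psi(\q)},
\]
I therefore have $A_{n,m}'(\q) = B(\q)^m$ as subsets of $(\TT^n)^m = \TT^{nm}$. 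By Fubini, $\Leb(A_{n,m}'(\q_i)) = \Leb(B(\q_i))^m$ and $\Leb(A_{n,m}'(\q_1)\cap A_{n,m}'(\q_2)) = \Leb(B(\q_1)\cap B(\q_2))^m$, so it suffices to show $\Leb(B(\q_1)\cap B(\q_2)) = \Leb(B(\q_1))\Leb(B(\q_2))$ in $\TT^n$.

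Next, I would prove the one-column independence by a Fourier argument modeled on Lemma~\ref{lem:indE}. Let $d_i=\gcd(\q_i)$ and $\q'_i=\q_i/d_i\in\PP^n$. Because $\q_i\cdot\y$ is well-defined on $\TT^n$ modulo $d_i$, and membership in $B(\q_i)$ depends only on that residue, there is a $\set{0,1}$-valued function $h_i$ on $\RR/d_i\ZZ$ with $\bone_{B(\q_i)}(\y) = h_i(\q_i\cdot\y)$. Expanding $h_i$ in a Fourier series $h_i(t) \sim \sum_{k\in\ZZ} c^{(i)}_k\, e(kt/d_i)$ yields
\[
  \bone_{B(\q_i)}(\y) \sim \sum_{k\in\ZZ} c^{(i)}_k\, e(k\,\q'_i\cdot\y),
\]
so the Fourier support of $\bone_{B(\q_i)}$ on $\TT^n$ lies in $\ZZ\q'_i$, with constant term $c^{(i)}_0 = \Leb(B(\q_i))$. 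Linear independence of $\q_1,\q_2$ passes to $\q'_1,\q'_2$, so $\ZZ\q'_1\cap\ZZ\q'_2 = \set{0}$; computing $\int_{\TT^n}\bone_{B(\q_1)}\bone_{B(\q_2)}\,d\y$ as a double Fourier sum and invoking orthogonality of characters exactly as in Lemma~\ref{lem:indE}, only the term $(k_1,k_2)=(0,0)$ survives, yielding $c^{(1)}_0 c^{(2)}_0 = \Leb(B(\q_1))\Leb(B(\q_2))$.

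The main (minor) obstacle is bookkeeping: once one records the column-wise factorization and identifies that $\bone_{B(\q)}$, like the stripe indicator $\bone_{E(\q,\bv,\eps)}$ in Lemma~\ref{lem:indE}, has Fourier expansion supported on $\ZZ\q'$, the orthogonality computation is essentially a verbatim repetition of that lemma's proof. The only conceptual difference is that $B(\q)$ is a union of several parallel stripes---one for each coprime residue class modulo $d=\gcd(\q)$---rather than a single stripe, which is absorbed into the function $h_i$ and does not affect the key Fourier-support property.
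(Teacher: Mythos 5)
Your proof is correct and follows essentially the same route as the paper's: reduce to a single column via the product structure $A_{n,m}'(\q)=A_{n,1}'(\q)^m$, then use Fourier orthogonality on $\TT^n$, noting that $\bone_{A_{n,1}'(\q_i)}$ has Fourier support in $\ZZ\q_i'$ and that linear independence forces $\ZZ\q_1'\cap\ZZ\q_2'=\set{0}$. The only organizational difference is that you Fourier-expand the full column indicator $\bone_{B(\q_i)}$ directly, whereas the paper first writes $A_{n,1}'(\q)$ as a finite disjoint union of single-stripe sets $E(\q,\bv,\eps)$ and invokes Lemma~\ref{lem:indE} pairwise, so your version neatly absorbs the small bookkeeping step about merging overlapping stripes.
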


\begin{proof}
  The lemma clearly holds if either of the sets $A_{n,m}'(\q_i)$ has measure
  $0$. Let us assume that they both have positive measure. The set
  $A_{n,m}'(\q_i)$ $(i=1,2)$ has the form
  \begin{equation*}
    A_{n,m}'(\q_i) = \prod_{j=1}^m A_{n,1}'(\q_i)
  \end{equation*}
  where the $j$th factor corresponds to the $j$th column in $\TT^{nm}$
  viewed as $n\times m$ matrices. Therefore, 
  \begin{equation}\label{eq:powerm}
    \Leb\parens*{A_{n,m}'(\q_i)} = \Leb\parens*{A_{n,1}'(\q_i)}^m
  \end{equation}
  and
  \begin{equation}\label{eq:powermint}
    \Leb\parens*{A_{n,m}'(\q_1)\cap A_{n,m}'(\q_2)} = \Leb\parens*{A_{n,1}'(\q_1)\cap A_{n,1}'(\q_2)}^m
  \end{equation}
  where $\Leb$ is to be understood as Lebesgue measure in the
  appropriate dimension.

  We claim that each $A_{n,1}'$ is a disjoint union of
  finitely many sets of the form $E(\q_i,\bv, \eps)$. To see it, note
  that we have
  \begin{equation*}
    A_{n,1}'(\q) = \bigcup_{\abs{p}\leq\abs{\q}} E(\q, \bv_{p,\q}, \psi(\q)),
  \end{equation*}
  where $\bv_{p,\q}$ is any solution to $\q\cdot \bv = p$. The sets
  $E(\q, \bv_{p,\q}, \psi(\q))$ may not be disjoint, but any two that
  intersect have a union that is of the form $E(\q,\bv,\eps)$ for some
  other $\bv$ and $\eps$. The claim follows upon recombining.

  Therefore, by Lemma~\ref{lem:indE} we have
  \begin{equation*}
    \Leb\parens*{A_{n,1}'(\q_1)\cap A_{n,1}'(\q_2)} = \Leb\parens*{A_{n,1}'(\q_1)}\Leb\parens*{A_{n,1}'(\q_2)}.
  \end{equation*}
  The lemma is proved by combining this last expression
  with~(\ref{eq:powerm}) and~(\ref{eq:powermint}).
\end{proof}

\begin{lemma}\label{lem:projection}
  For $d,k,\ell\geq 1$ and $\q\in\PP^n$, we have
  \begin{equation*}
    \Leb\parens*{A_{n,m}'(d\q)} = \Leb\parens*{A_{1,m}' \parens*{d, \psi(d\q)}}
  \end{equation*}
  and
  \begin{equation*}
    \Leb\parens*{A_{n,m}'(k\q)\cap A_{n,m}'(\ell\q)}  = \Leb\parens*{A_{1,m}'(k, \psi(k\q))\cap A_{1,m}'(\ell, \psi(\ell\q))}.
  \end{equation*}
\end{lemma}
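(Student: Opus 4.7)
The plan is to exploit the primitivity of $\q$ via a unimodular integer change of coordinates, which reduces the $n$-dimensional problem to a one-dimensional one in the direction of $\q$.

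Since $\q\in\PP^n$ is primitive, there exists a matrix $U\in\operatorname{GL}_n(\ZZ)$ (with integer entries and $\det U=\pm 1$) whose first row equals $\q$; this is a standard consequence of the Smith normal form, or equivalently, any primitive row vector in $\ZZ^n$ can be completed to an integer basis. The map $\x\mapsto U\x$ from $\operatorname{Mat}_{n\times m}(\RR)$ to itself has integer-matrix inverse $\y\mapsto U^{-1}\y$, so it descends to a Lebesgue-measure-preserving bijection of $\TT^{nm}$. Writing $\y=U\x$, the first row of $\y$ is $y_1=\q\x\in\TT^m$, and for any $d\geq 1$ we have $(d\q)\x=d\,y_1$.

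Because $\q$ is primitive, $\gcd(d\q)=d$, so the coprimality requirement $\gcd(p_i,d\q)=1$ is equivalent to $\gcd(p_i,d)=1$. Therefore
\[
\x\in A_{n,m}'(d\q)\iff y_1\in A_{1,m}'(d,\psi(d\q)),
\]
which shows that under $U$, the set $A_{n,m}'(d\q)$ is mapped to the cylinder $A_{1,m}'(d,\psi(d\q))\times\TT^{(n-1)m}$. Since $U$ preserves Lebesgue measure on $\TT^{nm}$, Fubini's theorem immediately yields the first identity.

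For the second identity, the same matrix $U$ (which depends only on $\q$) transforms both $A_{n,m}'(k\q)$ and $A_{n,m}'(\ell\q)$ into cylinders determined entirely by $y_1$, with bases $A_{1,m}'(k,\psi(k\q))$ and $A_{1,m}'(\ell,\psi(\ell\q))$ respectively. Their intersection is therefore the cylinder over $A_{1,m}'(k,\psi(k\q))\cap A_{1,m}'(\ell,\psi(\ell\q))$, and Fubini again gives the stated equality. The argument presents no serious obstacle; its only substantive ingredient is the existence of the unimodular completion $U$ of the primitive vector $\q$, which is classical.
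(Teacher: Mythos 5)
Your proof is correct and rests on the same core observation as the paper's: primitivity of $\q$ lets you treat $\q\x$ as a clean torus coordinate. The paper phrases this slightly differently --- it first reduces to $m=1$ via the product structure (equations~(\ref{eq:powerm}) and~(\ref{eq:powermint})) and then pulls back along the measure-preserving quotient map $T_\q:\TT^n\to\TT^1$, $\x\mapsto\q\cdot\x\,(\bmod\,1)$ --- whereas you make the measure-preservation explicit via the unimodular completion $U$ and apply Fubini on all of $\TT^{nm}$ at once, but these are the same argument in substance.
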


\begin{proof}
  Because of~(\ref{eq:powerm}) and~(\ref{eq:powermint}), it is
  sufficient to prove the lemma in the case $(n,m) = (n,1)$. We have
  \begin{equation*}
    A_{n,1}'(d\q) = \set*{\x\in\TT^n : \exists p\in\ZZ,\, \abs*{d\q\cdot\x - p} < \psi(d\q),\, \gcd(p,d)=1}. 
  \end{equation*}
  Let $T_\q: \TT^n\to \TT^1$ be the transformation $\x\mapsto \q\cdot\x\,(\bmod\, 1)$, and notice that
  \begin{equation*}
    A_{n,1}'(d\q) = T_\q^{-1}\parens*{A_{1,1}'(d, \psi(d\q))}. 
  \end{equation*}
  Since $T_\q$ preserves Lebesgue measure, the lemma follows.
\end{proof}

\begin{lemma}\label{lem:dilation}
  Suppose $I_1, I_2,\dots, I_r \subset \TT^m$ are disjoint balls and
  $J_1, J_2,\dots, J_r\subset \TT^m$ are disjoint balls. Then for any
  $0 < \Sigma \leq 1$,
  \begin{equation*}
    \Leb\parens*{\bigcup_{i=1}^r\Sigma \bullet I_i \cap \bigcup_{j=1}^s\Sigma\bullet J_j} \leq \Sigma^m \Leb\parens*{\bigcup_{i=1}^r I_i\cap\bigcup_{j=1}^s J_j},
  \end{equation*}
  where $\Sigma \bullet I_i$ denotes the concentric scaling of the ball
  $I_i$ by $\Sigma$, and similar for $\Sigma \bullet J_j$.
\end{lemma}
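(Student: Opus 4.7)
The plan is to reduce the lemma to a pairwise estimate between individual balls and then verify that estimate coordinate by coordinate. The first observation is that because the $I_i$ are pairwise disjoint and the $J_j$ are pairwise disjoint, the doubly-indexed family of intersections $\{I_i\cap J_j\}_{i,j}$ is itself pairwise disjoint: two members involving different $i$'s are separated because $I_{i_1}$ and $I_{i_2}$ are, and similarly for the $j$'s. Since $\Sigma\bullet I_i\subseteq I_i$ and $\Sigma\bullet J_j\subseteq J_j$, the same holds for $\{\Sigma\bullet I_i\cap\Sigma\bullet J_j\}_{i,j}$. Both sides of the desired inequality therefore split as
\begin{align*}
  \Leb\parens*{\bigcup_{i=1}^r\Sigma\bullet I_i\cap\bigcup_{j=1}^s\Sigma\bullet J_j}&=\sum_{i,j}\Leb(\Sigma\bullet I_i\cap\Sigma\bullet J_j),\\
  \Leb\parens*{\bigcup_{i=1}^r I_i\cap\bigcup_{j=1}^s J_j}&=\sum_{i,j}\Leb(I_i\cap J_j),
\end{align*}
and it suffices to prove the pairwise estimate $\Leb(\Sigma\bullet I_i\cap\Sigma\bullet J_j)\leq\Sigma^m\Leb(I_i\cap J_j)$ for each pair $(i,j)$.

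Next, since $\abs{\cdot}$ denotes the sup-norm throughout the paper, each ball is an axis-aligned cube, and the intersection $I_i\cap J_j$ is a product of its $m$ coordinate intersections. Consequently both $\Leb(I_i\cap J_j)$ and $\Leb(\Sigma\bullet I_i\cap\Sigma\bullet J_j)$ factor as products over coordinates, so it suffices to establish the one-dimensional version $\Leb(\Sigma\bullet I\cap\Sigma\bullet J)\leq\Sigma\cdot\Leb(I\cap J)$ for intervals $I,J$ in a single coordinate, after which taking the product over the $m$ coordinates produces the factor $\Sigma^m$.

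For the one-dimensional estimate, given intervals of half-lengths $\rho,\sigma$ whose centers differ by $\delta$, the relevant lengths are $\max(0,\rho+\sigma-\delta)$ for $I\cap J$ and $\max(0,\Sigma(\rho+\sigma)-\delta)$ for its concentric scaling. The inequality is trivial when the scaled intersection is empty; otherwise $\delta<\Sigma(\rho+\sigma)\leq\rho+\sigma$, and $\Sigma(\rho+\sigma)-\delta\leq\Sigma(\rho+\sigma-\delta)$ reduces to $(1-\Sigma)\delta\geq 0$, which holds because $\Sigma\leq 1$. The one mild technicality is that the balls live on $\TT^m$ rather than $\RR^m$; but any collection of pairwise disjoint sup-norm balls in $\TT^m$ must have radii less than $1/2$, so every non-empty pairwise intersection $I_i\cap J_j$ lifts unambiguously to a single box in $\RR^m$ where the coordinatewise computation applies directly. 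The main obstacle, if any, is not computational but organizational: once the pairwise disjointness of $\{I_i\cap J_j\}_{i,j}$ is recognized at the outset, the remainder is elementary interval arithmetic performed in each of the $m$ factors.
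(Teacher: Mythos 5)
Your reduction to a single pair $\Leb(\Sigma\bullet I_i\cap\Sigma\bullet J_j)\leq\Sigma^m\Leb(I_i\cap J_j)$ via the pairwise disjointness of $\{I_i\cap J_j\}_{i,j}$ matches the paper's first step exactly. After that your route diverges from the paper's: you go coordinate by coordinate with explicit interval arithmetic, whereas the paper compares the concentric scaling to a global linear contraction $\Sigma\cdot[0,1]^m$, observes that the global contraction scales the intersection exactly by $\Sigma^m$, and then notes that the concentrically-scaled balls have the same radii but centers that are \emph{farther} apart than the globally-contracted ones, so their intersection is no larger. That argument is case-free and works for balls in any norm.

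Your coordinate argument, as written, has a gap: the formula $\Leb(I\cap J)=\max(0,\rho+\sigma-\delta)$ is not correct for two intervals of half-lengths $\rho,\sigma$ at center distance $\delta$. When one interval is nested inside the other, namely when $\delta<\abs{\rho-\sigma}$, the intersection is the smaller interval and has length $2\min(\rho,\sigma)$, which is strictly less than $\rho+\sigma-\delta$. The correct formula is $\max\bigl(0,\min(2\rho,\,2\sigma,\,\rho+\sigma-\delta)\bigr)$, and similarly for the scaled pair. Since your claimed formula is an \emph{upper} bound on the true $\Leb(I\cap J)$, and you then multiply by $\Sigma$, the chain
\[
\text{(true scaled overlap)}\;\leq\;\Sigma(\rho+\sigma)-\delta\;\leq\;\Sigma(\rho+\sigma-\delta)
\]
does not establish the desired comparison with $\Sigma\cdot\text{(true original overlap)}$ in the nested case. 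The inequality is nevertheless true and your computation can be repaired: with the correct formulas, the two bounds $2\Sigma\rho$ and $2\Sigma\sigma$ are common to both minima, so it suffices to compare the third entries, which is exactly your $(1-\Sigma)\delta\geq 0$ step; alternatively, handle the sub-case where the scaled intervals are still nested (equality holds) and the sub-case where they are not ($\Sigma(\rho+\sigma)-\delta\leq 2\Sigma\min(\rho,\sigma)$ because $\delta\geq\Sigma\abs{\rho-\sigma}$). Either patch closes the gap, but as written the proof is incomplete. Your observation that disjoint balls on $\TT^m$ have radii less than $1/2$ and hence the pairwise intersections lift to single boxes in $\RR^m$ is correct and worth keeping.
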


\begin{proof}
  By the disjointness assumption, we have
  \begin{equation}\label{eq:7}
    \Leb\parens*{\bigcup_{i=1}^r I_i\cap\bigcup_{j=1}^s J_j} = \sum_{i,j}\Leb\parens*{I_i\cap J_j}
  \end{equation}
  and
  \begin{equation}\label{eq:8}
    \Leb\parens*{\bigcup_{i=1}^r \Sigma\bullet I_i\cap\bigcup_{j=1}^s \Sigma\bullet J_j} = \sum_{i,j}\Leb\parens*{\Sigma\bullet I_i\cap\Sigma\bullet J_j}
  \end{equation}
  Clearly,
  \begin{equation}\label{eq:9}
    \Leb\parens*{\Sigma I_i\cap \Sigma J_j} = \Sigma^m \Leb\parens*{I_i\cap J_j},
  \end{equation}
  where $\Sigma I_i$ and $\Sigma J_j$ denote the images of $I_i$ and
  $J_j$ under the contraction of $[0,1]^m\subset \RR^m$ by $\Sigma$. Since
  \begin{equation*}
    \Leb(\Sigma I_i) = \Leb(\Sigma\bullet I_i)\quad\textrm{and}\quad\Leb(\Sigma J_j) = \Leb(\Sigma\bullet J_j),
  \end{equation*}
  and the centers of $\Sigma\bullet I_i$ and $\Sigma\bullet J_j$ are
  farther apart that the centers of $\Sigma I_i$ and $\Sigma J_j$, we
  have
  \begin{equation}\label{eq:10}
    \Leb\parens*{\Sigma\bullet I_i\cap\Sigma\bullet J_j} \leq \Leb\parens*{\Sigma I_i\cap\Sigma J_j}.
  \end{equation}
  The lemma follows on
  combining~(\ref{eq:7}),~(\ref{eq:8}),~(\ref{eq:9}),
  and~(\ref{eq:10}).
\end{proof}

\begin{lemma}\label{lem:size}
  Let $d\geq 1, \q\in\PP^n$. If $\psi(d\q) \leq d/(2\varphi(d))$, then
  \begin{equation*}
    \Leb\parens*{A_{n,m}'(d\q)} \geq \parens*{\frac{\varphi(d)}{d}\psi(d\q)}^m.
  \end{equation*}
 In fact, for every $d$, there is a subset
  \begin{equation*}
    P_d \subset \set*{\p \in \ZZ^m : \abs{\p}\leq d, \gcd(p_i, d)=1, (i=1\dots, m)}
  \end{equation*}
  such that $\#P_d \geq 3^{-m}\varphi(d)^m$ and such that 
  \begin{equation}\label{eq:12}
  \abs*{\frac{\p_1}{d} - \frac{\p_2}{d}}\geq \frac{d}{\varphi(d)}
\end{equation}
for any two distinct $\p_1, \p_2\in P_d$.
\end{lemma}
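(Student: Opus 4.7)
The plan has three steps: reduce to one dimension, construct a separated set of coprime residues, and derive the measure bound from disjointness.

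First, using Lemma~\ref{lem:projection} together with the product identity $A_{1,m}'(d,\eps) = \prod_{j=1}^m A_{1,1}'(d,\eps)$ (as in~(\ref{eq:powerm})), I get
\begin{equation*}
  \Leb\parens*{A_{n,m}'(d\q)} = \Leb\parens*{A_{1,1}'(d,\psi(d\q))}^m,
\end{equation*}
so it suffices to work in the case $n=m=1$ and raise the result to the $m$-th power.

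Second, I would construct $P_d$ as a Cartesian power $P_d = P^m$, where $P\subset\set{p\in\ZZ:0\leq p<d,\ \gcd(p,d)=1}$ is a set of coprime residues modulo $d$ with $\#P\geq\varphi(d)/3$ and pairwise integer separation $\abs{p-p'}\geq d/\varphi(d)$. Both properties pass to the Cartesian product: $\#P_d=(\#P)^m\geq 3^{-m}\varphi(d)^m$, and any two distinct vectors in $P^m$ differ in some coordinate, so their sup-norm distance is $\geq d/\varphi(d)$. To construct $P$ itself, I list the $\varphi(d)$ coprime residues as $p_1<\cdots<p_{\varphi(d)}$ in $[0,d)$ and partition $[0,d)$ into length-$L$ blocks with $L=\lceil d/\varphi(d)\rceil$. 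Classifying residues by the index of the containing block modulo~$3$, a pigeonhole argument identifies a class with at least $\varphi(d)/3$ residues, and within this class consecutive occupied blocks are automatically separated by at least $2L$. Picking a single representative from each occupied block in that class then yields a subset with the required pairwise spacing.

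Third, with $P_d$ in hand, the first assertion of the lemma follows by a disjoint-balls argument. Each $\p\in P_d$ contributes a sup-ball $\set{\x\in\TT^m:\abs{d\x-\p}<\psi(d\q)}$ of Lebesgue measure $(2\psi(d\q)/d)^m$, and the hypothesis $\psi(d\q)\leq d/(2\varphi(d))$ combined with the centers' separation of at least $1/\varphi(d)$ forces these balls to be pairwise disjoint in $\TT^m$. Summing,
\begin{equation*}
  \Leb\parens*{A_{n,m}'(d\q)} \geq \#P_d \cdot \parens*{\frac{2\psi(d\q)}{d}}^m \gg \parens*{\frac{\varphi(d)\psi(d\q)}{d}}^m.
\end{equation*}

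The main obstacle is securing the count $\#P\geq\varphi(d)/3$: the pigeonhole step identifies $\geq\varphi(d)/3$ residues in a single class, but picking at most one representative per block can shrink this count drastically if the residues cluster in a few blocks. The finer argument must exploit the constraints $c_j\leq L$ on the number of coprime residues per block together with the total $\sum_j c_j=\varphi(d)$, likely via a more careful choice of the pigeonhole classes or averaging over shifts of the block partition, to handle the worst case in which $\varphi(d)/d$ is small.
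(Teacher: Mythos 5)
Your reduction to one dimension via Lemma~\ref{lem:projection} and the product identity~(\ref{eq:powerm}) is correct and matches the paper, and taking $P_d$ as a Cartesian power $P^m$ of a separated one-dimensional set $P$ is a clean way to handle the sup-norm separation and the cardinality. The derivation of the measure lower bound \emph{from} $P_d$ in your third step is also fine (up to a harmless absolute constant), since the hypothesis $\psi(d\q)\leq d/(2\varphi(d))$ together with the spacing of $P_d$ forces the balls to be disjoint.

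The genuine gap is the one you flag at the end: the pigeonhole/block construction of $P$ does not go through, and the constraints $c_j\leq L$, $\sum_j c_j=\varphi(d)$ do not rescue it. After fixing a residue class of block indices mod~$3$ that contains $\geq\varphi(d)/3$ coprime residues, the number of \emph{representatives} you can select is only the number of occupied blocks in that class, which in the worst case is $\approx(\varphi(d)/3)/L\approx\varphi(d)^2/(3d)$; this is $\ll\varphi(d)$ whenever $\varphi(d)/d$ is small. And the clustering is real: if $d$ has no prime factor $\leq L$ then $\{1,\dots,L\}$ is entirely coprime to $d$, so a single block can absorb $L$ residues while the average occupancy is $1$. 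No purely combinatorial selection (greedy, pigeonhole, or shift-averaged blocks) can extract a positive proportion of $\varphi(d)$ well-spaced residues without invoking some arithmetic equidistribution input that you have not supplied.

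The paper proceeds in the opposite logical order, and that is what closes the gap. It first proves the measure lower bound, using Pollington and Vaughan's estimate \cite[p.~192, eq.~(3)]{PollingtonVaughan} together with Lemma~\ref{lem:projection} and~(\ref{eq:powerm}); in particular
\begin{equation*}
  \Leb\parens*{A_{1,m}'\parens*{d,\tfrac{d}{2\varphi(d)}}}\geq 2^{-m}.
\end{equation*}
That bound already encodes the equidistribution of the fractions $\p/d$ that your pigeonhole step is trying to establish by hand. The set $P_d$ is then obtained as a \emph{consequence}: Vitali's covering lemma applied to the balls of radius $1/(2\varphi(d))$ centred at those fractions yields a disjoint subfamily whose $3$-fold dilates still cover the set, and comparing measures gives $\#P_d\gg\varphi(d)^m$ while the required separation is automatic from disjointness. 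So the dependency is (measure bound) $\Rightarrow$ (existence of $P_d$), not the reverse. If you replace your block-pigeonhole step with this Pollington--Vaughan input plus a Vitali extraction, the rest of your write-up stands.
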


\begin{proof}
  The first part is~\cite[Page 192, Equation (3)]{PollingtonVaughan}
  applied to $A_{1,1}' \parens*{d, \psi(d\q)}$, which has the same
  measure as $A_{n,1}'(d\q)$, by Lemma~\ref{lem:projection}. We then
  apply~(\ref{eq:powerm}).

  The second part can be deduced from the first part in the following
  way. We have
  \begin{equation*}
    \Leb\parens*{A_{1,m}'\parens*{d, \frac{d}{2\varphi(d)}}} \geq \frac{1}{2^m}.
  \end{equation*}
  By Vitali's covering lemma, there is a disjoint subcollection
  \begin{equation*}
    A''\subset A_{1,m}'\parens*{d, \frac{d}{2\varphi(d)}}
  \end{equation*}
  of balls such that the union of their $3$-fold dilates covers
  $A_{1,m}'\parens*{d, \frac{d}{2\varphi(d)}}$. Denote by $P_d$ the set of
  numerator vectors of the centers of the balls making up $A''$. Then
  \begin{equation*}
    \Leb\parens*{A_{1,m}'\parens*{d, \frac{d}{2\varphi(d)}}} \leq \# P_d\cdot\parens*{\frac{3}{2\varphi(d)}}^m.
  \end{equation*}
  Combining, we have $\# P_d \geq 3^{-m} \varphi(d)^m$. And since the
  disjoint balls comprising $A''$ have radius $d/(2\varphi(d))$, we
  have~(\ref{eq:12}).
\end{proof}

\begin{lemma}\label{lem:doverphid}
  Suppose that for every $d\geq 1$ and $\q\in\PP^n$, we have
  $\psi(d\q)=0$ or $\psi(d\q)>d/(2\varphi(d))$ and that the latter
  occurs for infinitely many $(d,\q)\in\NN\times\PP^n$. Then
  $\Leb(W'(\psi))=1$.
\end{lemma}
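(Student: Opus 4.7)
The plan is to show $\Leb(W'(\psi))>0$ and then upgrade this to full measure via the Beresnevich--Velani zero-one law (Theorem~\ref{thm:01}). Positivity will come from a reverse-Fatou argument: the hypothesis supplies infinitely many distinct nonzero $\q\in\ZZ^n$ for which $\Leb(A_{n,m}'(\q))$ is bounded below by a positive constant depending only on $m$, and this alone forces $W'(\psi)\subset\TT^{nm}$ to have positive Lebesgue measure.

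For the pointwise lower bound, I would fix any $(d,\q)\in\NN\times\PP^n$ with $\psi(d\q)>d/(2\varphi(d))$ and observe that $A_{n,m}'(d\q)=A_{n,m}'(d\q,\psi(d\q))\supset A_{n,m}'(d\q,\,d/(2\varphi(d)))$ by monotonicity of the latter in its width parameter. Lemma~\ref{lem:size} (applied to any function equal to $d/(2\varphi(d))$ at $d\q$, which trivially satisfies its hypothesis) then yields $\Leb(A_{n,m}'(d\q))\geq\bigl(\tfrac{\varphi(d)}{d}\cdot\tfrac{d}{2\varphi(d)}\bigr)^m=2^{-m}$.

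The hypothesis guarantees this occurs for infinitely many pairs $(d,\q)\in\NN\times\PP^n$, and because primitive representations are unique, distinct pairs produce distinct vectors $d\q\in\ZZ^n\setminus\set{0}$. Recalling that $W'(\psi)$ is precisely the set of $\x\in\TT^{nm}$ lying in $A_{n,m}'(\q)$ for infinitely many $\q\in\ZZ^n$ (since only finitely many $\q$ share any given norm), the reverse Fatou inequality in the probability space $\TT^{nm}$, applied to an enumeration $(\q_i)$ of the vectors furnished above, gives $\Leb(W'(\psi))\geq\limsup_i\Leb(A_{n,m}'(\q_i))\geq 2^{-m}>0$. Theorem~\ref{thm:01} then upgrades this to $\Leb(W'(\psi))=1$. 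No genuine obstacle arises in this lemma: it isolates the ``easy'' regime in which $\psi$ is already so large that single-vector approximation sets carry substantial measure, with the only delicate point being the truncation used to bring $\psi$ into compliance with the hypothesis of Lemma~\ref{lem:size} before invoking it.
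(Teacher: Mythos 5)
Your proposal is correct and follows the same route as the paper: use Lemma~\ref{lem:size} (with the truncated width $d/(2\varphi(d))$) together with monotonicity in the width parameter to get $\Leb(A_{n,m}'(d\q))\geq 2^{-m}$ for infinitely many distinct index vectors, deduce $\Leb(W'(\psi))\geq 2^{-m}$ (the paper leaves the reverse-Fatou step implicit, you spell it out), and then invoke Theorem~\ref{thm:01} to upgrade to full measure.
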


\begin{proof}
  If $\psi(d\q)>d/(2\varphi(d))$, then
  \begin{align*}
    \Leb\parens*{A'(d\q, \psi(d\q))}
    &\geq \Leb\parens*{A'(d\q, d/(2\varphi(d)))} \\
    &\geq 2^{-m},
  \end{align*}
  by Lemma~\ref{lem:size}. This implies that $W'(\psi)$ has measure at
  least $2^{-m}$, and so by Theorem~\ref{thm:01} it must have measure $1$. 
\end{proof}

\begin{lemma}\label{lem:redux}
  Suppose $\psi:\ZZ^n\to \RR_{\geq 0}$ and for each $d\geq 1$ and
  $\q\in\PP^n$ we have $\psi(d\q) \leq d/(2\varphi(d))$. If there is some $c>0$ such that
  $\frac{\varphi(d)\Psi(d)}{d} \geq c$ for infinitely many
  $d\geq 1$, then $\Leb\parens*{W'(\psi)}=1$.
\end{lemma}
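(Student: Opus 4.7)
The plan is to combine the exact pairwise independence granted by Lemma~\ref{lem:indA} with the Chung--Erd\H{o}s inequality (Proposition~\ref{lem:erdoschung}) to obtain a uniform positive lower bound for $\Leb\parens*{\bigcup_{\q\in F_d}A_{n,m}'(d\q)}$ along the sequence of $d$'s provided by the hypothesis, where each $F_d\subset\PP^n$ will be a carefully chosen finite set. Having exhibited a positive-measure subset of $W'(\psi)$, I will then promote positive measure to full measure via the zero-one law (Theorem~\ref{thm:01}).

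Fix a $d$ for which $\varphi(d)\Psi(d)/d\geq c$. The standing hypothesis $\psi(d\q)\leq d/(2\varphi(d))$ makes the lower bound in Lemma~\ref{lem:size} applicable, and summing it over $\q\in\PP^n$ yields
\begin{equation*}
\sum_{\q\in\PP^n}\Leb(A_{n,m}'(d\q))\;\geq\;\parens*{\frac{\varphi(d)}{d}}^m\sum_{\q\in\PP^n}\psi(d\q)^m\;=\;\parens*{\frac{\varphi(d)\Psi(d)}{d}}^m\;\geq\;c^m.
\end{equation*}
Because each summand is bounded above by $1$, I can build a finite set $F_d\subset\PP^n$ incrementally so that $S_d:=\sum_{\q\in F_d}\Leb(A_{n,m}'(d\q))$ lies in $[c^m/2,\,c^m/2+1]$. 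For any two distinct $\q_1,\q_2\in\PP^n$, the vectors $d\q_1$ and $d\q_2$ are linearly independent, so Lemma~\ref{lem:indA} gives $\Leb(A_{n,m}'(d\q_1)\cap A_{n,m}'(d\q_2))=\Leb(A_{n,m}'(d\q_1))\Leb(A_{n,m}'(d\q_2))$, and hence
\begin{equation*}
\sum_{\q_1,\q_2\in F_d}\Leb\parens*{A_{n,m}'(d\q_1)\cap A_{n,m}'(d\q_2)}\;\leq\;S_d+S_d^2.
\end{equation*}
Proposition~\ref{lem:erdoschung} then produces
\begin{equation*}
\Leb\parens*{\bigcup_{\q\in F_d}A_{n,m}'(d\q)}\;\geq\;\frac{S_d^2}{S_d+S_d^2}\;=\;\frac{S_d}{1+S_d}\;\geq\;\frac{c^m/2}{1+c^m/2+1}\;=:\;c_0>0,
\end{equation*}
with $c_0$ depending only on $c$ and $m$.

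To finish, set $B_d:=\bigcup_{\q\in F_d}A_{n,m}'(d\q)$. Since $\Leb(B_d)\geq c_0$ for each of the infinitely many $d$ provided by the hypothesis, reverse Fatou on the probability space $\TT^{nm}$ yields $\Leb(\limsup_d B_d)\geq c_0$. Any $\x\in\limsup_d B_d$ belongs to $A_{n,m}'(d\q)$ for infinitely many pairs $(d,\q)\in\NN\times\PP^n$, and since the map $(d,\q)\mapsto d\q$ is injective on $\NN\times\PP^n$, these pairs produce infinitely many distinct $\br\in\ZZ_{\geq 0}^n$, whence $\x\in W'(\psi)$. Therefore $\Leb(W'(\psi))\geq c_0>0$, and Theorem~\ref{thm:01} upgrades this to $\Leb(W'(\psi))=1$. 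I do not anticipate a real obstacle here: the linear-independence of distinct primitive vectors makes the pairwise independence of the relevant family \emph{exact} rather than averaged, so the usual difficulties encountered in Duffin--Schaeffer style arguments---the overlap estimates of Proposition~\ref{lem:PVoverlaps} and the control of the $\prod(1+1/p)$ factor---do not arise. The only delicate bookkeeping is the two-sided control of $S_d$ needed to make Chung--Erd\H{o}s produce a uniform lower bound along the sequence.
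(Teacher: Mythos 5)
Your proof is correct and follows essentially the same route as the paper: both combine the exact pairwise independence of Lemma~\ref{lem:indA}, the lower bound of Lemma~\ref{lem:size}, the Chung--Erd\H{o}s inequality, and the zero-one law of Theorem~\ref{thm:01}. The one genuine (if minor) difference is that you truncate to a finite subset $F_d\subset\PP^n$ before applying Chung--Erd\H{o}s, which lets you invoke only the finitary version and uniformly absorbs the case $\Psi(d)=\infty$; the paper instead applies Chung--Erd\H{o}s to the infinite union over all of $\PP^n$ and first disposes of the case $\Psi(d)=\infty$ separately via the second Borel--Cantelli lemma. Both routes land on the same uniform positive lower bound for $\Leb$ of the union, followed by reverse Fatou and the zero-one law.
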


\begin{proof}
  If for some $d\geq 1$ we have $\Psi(d) = \infty$, then the sets
  $\set*{A_{n,m}'(d\q, \psi(d\q))}_{\q\in\PP^n}$ have a diverging
  measure sum, and by Lemma~\ref{lem:indA}, they are pairwise
  independent. The result follows from the second Borel--Cantelli
  lemma. So let us assume that $\Psi(d)$ is always finite.

  Define for each $d\in\NN$, the set
  \begin{equation*}
    C(d) = \bigcup_{\q\in\PP^n}A_{n,m}'(d\q). 
  \end{equation*}
  By Proposition~\ref{lem:erdoschung},
  \begin{align*}
    \Leb(C(d))
    &\geq \frac{\parens*{\sum_{\q\in\PP^n} \Leb\parens{A_{n,m}'(d\q)}}^2}{\sum_{\q,\br\in\PP^n} \Leb\parens*{A_{n,m}'(d\q)\cap A_{n,m}'(d\br)}} \\
    &\overset{\textrm{Lem.~\ref{lem:indA}}}{\geq} \frac{\parens*{\sum_{\q\in\PP^n} \Leb\parens{A_{n,m}'(d\q)}}^2}{\sum_{\q,\br\in\PP^n} \Leb\parens*{A_{n,m}'(d\q)}\Leb\parens*{A_{n,m}'(d\br)}}\\
    &= \frac{\parens*{\sum_{\q\in\PP^n} \Leb\parens{A_{n,m}'(d\q)}}^2}{\parens*{\sum_{\q\in\PP^n} \Leb\parens{A_{n,m}'(d\q)}}^2 + \sum_{\q\in\PP^n} \Leb\parens{A_{n,m}'(d\q)}}\\
    &\overset{\textrm{Lem.~\ref{lem:size}}}{\geq} \frac{\sum_{\q\in\PP^n} \parens*{\frac{\varphi(d)\psi(d\q)}{d}}^m}{\sum_{\q\in\PP^n} \parens*{\frac{2\varphi(d)\psi(d\q)}{d}}^m + 1}
  \end{align*}
  If $\frac{\varphi(d)\Psi(d)}{d} \geq c$, then this implies
  \begin{equation*}
    \Leb\parens*{C(d)}\geq \frac{c^m}{2^mc^m + 1} > 0.
  \end{equation*}
  Since this happens for infinitely many $d$, it immediately implies
  $\limsup C(d)$ has positive measure, hence $W'(\psi)$ has positive
  measure. The lemma follows by applying Theorem~\ref{thm:01}.
\end{proof}

\section{Proof of Theorem~\ref{thm:multiv}}
\label{sec:proof}

We can dispatch the convergence part quickly. The sets
$A_{n,m}'(\q,\psi(\q))\, (\q\in\ZZ^n)$ have measures bounded above by
$\parens*{\frac{2\varphi(\gcd(\q))\psi(\q)}{\gcd(\q)}}^m$. If the
series in~(\ref{eq:maindiv}) converges, then the Borel--Cantelli lemma
implies $W'(\psi)$ has measure zero.

\

For the divergence part, we are given a function
$\psi: \ZZ_{\geq 0}^n\to\RR_{\geq 0}$ satisfying the divergence
condition~(\ref{eq:div}) and we must
show~(\ref{eq:measureone}). For $d\geq 1$, $\q\in\PP^n$, let
\begin{equation*}
  \psi_1(d\q) =
  \begin{cases}
    \psi(d\q) &\textrm{if } \psi(d\q) \leq \frac{d}{2\varphi(d)}\\
    0 &\textrm{if } \psi(d\q) >\frac{d}{2\varphi(d)}\\
  \end{cases}
\end{equation*}
and $\psi_2= \psi -\psi_1$. If $\psi_2$ satisfies~(\ref{eq:div}), then
$\Leb(W'(\psi_2))=1$, by Lemma~\ref{lem:doverphid}, and we are done
because $W'(\psi_2)\subset W'(\psi)$. Therefore, it suffices to prove
the theorem for $\psi_1$, that is, we may assume that $\psi_2\equiv 0$
and $\psi = \psi_1$. Having made that assumption,
Lemma~\ref{lem:redux} lets us further assume that
\begin{equation}\label{eq:smallPsi}
\frac{\varphi(d)\Psi(d)}{d} < 1/4
\end{equation}
for every $d\geq 1$. Among other things, this implies that for every
$X\geq 1$ there exists some $Y>X$ such that
\begin{equation*}
\parens*{\frac{1}{2}}^{m-1} < \sum_{X\leq d\leq Y}\parens*{\frac{\varphi(d)\Psi(d)}{d}}^m < \parens*{\frac{1}{2}}^{m-2}.
\end{equation*}
Let $X$ be one such large parameter.

Rather than work directly with the sets $A_{n,m}'(d\q, \psi(d\q))$, it
will be convenient to define the subsets
\begin{equation*}
  A_{n,m}''(d\q, \psi(d\q)) = \set*{\x\in\TT^{nm} : \exists \p\in \ZZ^m,\, \p\, (\bmod\, 1)\in P_d, \abs{(d\q)\x-\p} < \psi(d\q)},
\end{equation*}
where $P_d$ is as in Lemma~\ref{lem:size}. It will suffice to show
that the limsup set associated to the sets
$A_{n,m}''(d\q, \psi(d\q))$ has full measure. Notice that since
\begin{equation}\label{eq:doubleprimevprime}
  A_{n,m}''(d\q, \psi(d\q)) \gg A_{n,m}'(d\q, \psi(d\q)),
\end{equation}
the new sets also have a diverging measure sum.
  
Our goal is to show that for $X$ sufficiently large,
\begin{equation}\label{eq:task}
  \Leb\parens*{\bigcup_{X\leq d\leq Y}\bigcup_{\q\in\PP^n}A_{n,m}''(d\q, \psi(d\q))} \gg 1
\end{equation}
with an implicit constant that depends only on $m$. To that end,
observe that by Proposition~\ref{lem:erdoschung},
\begin{multline}\label{eq:echung}
  \Leb\parens*{\bigcup_{X\leq d\leq Y}\bigcup_{\q\in\PP^n}A_{n,m}''(d\q, \psi(d\q))} \\
  \geq \frac{\parens*{\sum_{X\leq d\leq Y}\sum_{\q\in\PP^n}\Leb\parens*{A_{n,m}''(d\q, \psi(d\q))}}^2}{\sum_{X\leq k,\ell \leq Y}\sum_{\q,\br\in\PP^n}\Leb\parens*{A_{n,m}''(k\q, \psi(k\q))\cap A_{n,m}''(\ell\br, \psi(\ell\br))}}.
\end{multline}
By Lemma~\ref{lem:size} we have
\begin{align*}
  \parens*{\sum_{X\leq d\leq Y}\sum_{\q\in\PP^n}\Leb\parens*{A_{n,m}''(d\q, \psi(d\q))}}^2
  &\gg \parens*{\sum_{X\leq d\leq Y}\sum_{\q\in\PP^n}\parens*{\frac{\varphi(d)\psi(d\q)}{d}}^m}^2 \\
  &= \parens*{\sum_{X\leq d\leq Y}\parens*{\frac{\varphi(d)\Psi(d)}{d}}^m}^2 \\
  &\geq \parens*{\frac{1}{2}}^{2(m-1)} \gg 1. 
\end{align*}
For the denominator expression in~(\ref{eq:echung}) we have
\begin{multline*}
  \sum_{X\leq k,\ell \leq Y}\sum_{\q,\br\in\PP^n}\Leb\parens*{A_{n,m}''(k\q, \psi(k\q))\cap A_{n,m}''(\ell\br, \psi(\ell\br))} \\
  =\sum_{X\leq d \leq Y}\sum_{\q \in\PP^n}\Leb\parens*{A_{n,m}''(d\q, \psi(d\q))} \\
  + \underset{k\q\neq\ell\br}{\sum_{X\leq k,\ell \leq Y}\sum_{\q,\br\in\PP^n}}\Leb\parens*{A_{n,m}''(k\q, \psi(k\q))\cap A_{n,m}''(\ell\br, \psi(\ell\br))} \\
  \ll \sum_{X\leq d \leq Y}\parens*{\frac{\varphi(d)\Psi(d)}{d}}^m
  + \underset{k\q\neq\ell\br}{\sum_{X\leq k,\ell \leq Y}\sum_{\q,\br\in\PP^n}}\Leb\parens*{A_{n,m}''(k\q, \psi(k\q))\cap A_{n,m}''(\ell\br, \psi(\ell\br))} \\
  \leq \parens*{\frac{1}{2}}^{m-2} + \underset{k\q\neq\ell\br}{\sum_{X\leq k,\ell \leq
      Y}\sum_{\q,\br\in\PP^n}}\Leb\parens*{A_{n,m}''(k\q,
    \psi(k\q))\cap A_{n,m}''(\ell\br, \psi(\ell\br))}.
\end{multline*}
Therefore, showing~(\ref{eq:task}) reduces to showing
\begin{equation}\label{eq:14}
  \underset{k\q\neq\ell\br}{\sum_{X\leq k,\ell \leq
      Y}\sum_{\q,\br\in\PP^n}}\Leb\parens*{A_{n,m}''(k\q,
    \psi(k\q))\cap A_{n,m}''(\ell\br, \psi(\ell\br))} \ll 1. 
\end{equation}  
We rewrite
\begin{multline*}
  \underset{k\q\neq\ell\br}{\sum_{X\leq k,\ell \leq Y}\sum_{\q,\br\in\PP^n}}\Leb\parens*{A_{n,m}''(k\q, \psi(k\q))\cap A_{n,m}''(\ell\br, \psi(\ell\br))} \\
  = \underset{\q\neq\br}{\sum_{X\leq k,\ell \leq Y}\sum_{\q,\br\in\PP^n}}\Leb\parens*{A_{n,m}''(k\q, \psi(k\q))\cap A_{n,m}''(\ell\br, \psi(\ell\br))} \\
  + \indent 2 \sum_{X\leq k< \ell \leq
    Y}\sum_{\q\in\PP^n}\Leb\parens*{A_{n,m}''(k\q, \psi(k\q))\cap
    A_{n,m}''(\ell\q, \psi(\ell\q))}.
\end{multline*}
By Lemma~\ref{lem:indA}, the first term is bounded above by
\begin{align*}
  \underset{\q\neq\br}{\sum_{X\leq k,\ell \leq Y}\sum_{\q,\br\in\PP^n}}&\Leb\parens*{A_{n,m}'(k\q, \psi(k\q))\cap A_{n,m}'(\ell\br, \psi(\ell\br))} \\
                                                                       &\leq \underset{\q\neq\br}{\sum_{X\leq k,\ell \leq Y}\sum_{\q,\br\in\PP^n}}\Leb\parens*{A_{n,m}'(k\q, \psi(k\q))}\Leb\parens*{A_{n,m}'(\ell\br, \psi(\ell\br))} \\
                                                                       &\ll \parens*{\sum_{X\leq d\leq Y}\parens*{\frac{\varphi(d)\Psi(d)}{d}}^m}^2 \leq \parens*{\frac{1}{2}}^{2(m-2)} \ll 1. 
\end{align*}
Therefore, the task of establishing~(\ref{eq:14}) has reduced to showing
\begin{equation}\label{eq:15}
  \sum_{X\leq k< \ell \leq Y}\sum_{\q\in\PP^n}\Leb\parens*{A_{n,m}''(k\q, \psi(k\q))\cap A_{n,m}''(\ell\q, \psi(\ell\q))} \ll 1
\end{equation}  
with implicit constant depending at most on $m$.

A simple modification of Lemma~\ref{lem:projection} shows that
\begin{equation*}
  \Leb\parens*{A_{n,m}''(k\q, \psi(k\q))\cap A_{n,m}''(\ell\q, \psi(\ell\q))} = \Leb\parens*{A_{1,m}''(k, \psi(k\q))\cap A_{1,m}''(\ell, \psi(\ell\q))}.
\end{equation*}
Notice that, because of~(\ref{eq:smallPsi}), the set
$A_{1,m}''(d, \Psi(d))$ is a union of disjoint balls for every $d\geq 1$. Therefore, by Lemma~\ref{lem:dilation},
\begin{multline*}
  \Leb\parens*{A_{1,m}''(k, \psi(k\q))\cap A_{1,m}''(\ell, \psi(\ell\q))} \\
  \leq \max\set*{\parens*{\frac{\psi(k\q)}{\Psi(k)}}^m,
    \parens*{\frac{\psi(\ell\q)}{\Psi(\ell)}}^m}\Leb\parens*{A_{1,m}''(k,
    \Psi(k))\cap A_{1,m}''(\ell, \Psi(\ell))}.
\end{multline*}
(In fact, this is the sole reason we work with $A''$
instead of $A'$.) Hence,
\begin{multline}\label{eq:11}
  \sum_{X\leq k< \ell \leq Y}\sum_{\q\in\PP^n}\Leb\parens*{A_{n,m}''(k\q, \psi(k\q))\cap A_{n,m}''(\ell\q, \psi(\ell\q))} \\
  \leq \sum_{X\leq k< \ell \leq Y}\Leb\parens*{A_{1,m}''(k, \Psi(k))\cap A_{1,m}''(\ell, \Psi(\ell))} \sum_{\q\in\PP^n} \parens*{\frac{\psi(k\q)^m}{\Psi(k)^m} +  \frac{\psi(\ell\q)^m}{\Psi(\ell)^m}} \\
  \leq 2\sum_{X\leq k< \ell \leq Y}\Leb\parens*{A_{1,m}''(k,
    \Psi(k))\cap A_{1,m}''(\ell, \Psi(\ell))}.
\end{multline}
Let $\I$ be the set of $d\geq 1$ for which $\Psi(d)>1/2$. If
$k,\ell\in\I$, then
\begin{equation*}
  \max\set*{2k\Psi(\ell), 2\ell\Psi(k)}\geq \max\set{k,\ell}\geq \sqrt{k\ell}.
\end{equation*}
By~(\ref{eq:smallPsi}), we also have $2\Psi(d) \leq d/(2\varphi(d))$,
so Proposition~\ref{lem:PVoverlaps} applies. Combining it
with~(\ref{eq:powerm}) and~(\ref{eq:powermint}), it tells us that
\begin{align*}
  \Leb\parens*{A_{1,m}'(k, 2\Psi(k))\cap A_{1,m}'(\ell, 2\Psi(\ell))}
  &\ll 4^m\parens*{\frac{\varphi(k)\Psi(k)}{k}}^m\parens*{\frac{\varphi(k)\Psi(k)}{k}}^m \\
  &\ll \Leb\parens*{A_{1,m}'(k, \Psi(k))}\Leb\parens*{A_{1,m}'(\ell, \Psi(\ell))},
\end{align*}
where again we have used~(\ref{eq:smallPsi}) in this last comparison. Hence,
\begin{align*}
  \Leb\parens*{A_{1,m}''(k, \Psi(k))\cap A_{1,m}''(\ell, \Psi(\ell))}
  & \leq \Leb\parens*{A_{1,m}'(k, 2\Psi(k))\cap A_{1,m}'(\ell, 2\Psi(\ell))}\\
  & \ll \Leb\parens*{A_{1,m}'(k, \Psi(k))}\Leb\parens*{A_{1,m}'(\ell, \Psi(\ell))} \\
    & \ll \Leb\parens*{A_{1,m}''(k, \Psi(k))}\Leb\parens*{A_{1,m}''(\ell, \Psi(\ell))},
\end{align*}
by~(\ref{eq:doubleprimevprime}). If in addition we had
\begin{equation*}
  \sum_{d\in\I}\parens*{\frac{\varphi(d)\Psi(d)}{d}}^m = \infty,
\end{equation*}
then we could have restricted $\psi(d\q)$ to $(d,\q)\in\I\times\PP^n$
at the outset. With~(\ref{eq:11}), we would have
\begin{multline*}
  \sum_{X\leq k< \ell \leq Y}\sum_{\q\in\PP^n}\Leb\parens*{A_{n,m}''(k\q, \psi(k\q))\cap A_{n,m}''(\ell\q, \psi(\ell\q))} \\
  \leq 2\sum_{X\leq k< \ell \leq Y}\Leb\parens*{A_{1,m}''(k,
    \Psi(k))}\Leb\parens*{A_{1,m}''(\ell, \Psi(\ell))} \\
  \ll \parens*{\sum_{X\leq d \leq
      Y}\parens*{\frac{\varphi(d)\Psi(d)}{d}}^m}^2 \leq \parens*{\frac{1}{2}}^{2(m-2)} \ll 1,
\end{multline*}
establishing~(\ref{eq:15}),~(\ref{eq:14}), and~(\ref{eq:task}), which
is our goal.

We may therefore assume that $\Psi(d)\leq 1/2$ for all $d\geq 1$. In
this case, we apply Proposition~\ref{prop:KMVP} to get
\begin{equation*}
  \sum_{X\leq k\leq \ell\leq Y}\Leb\parens*{A_{1,m}''(k, \Psi(k))\cap A_{1,m}''(\ell, \Psi(\ell))} \ll 1.
\end{equation*}
With~(\ref{eq:11}), this implies~(\ref{eq:15}), which
implies~(\ref{eq:14}) and in turn~(\ref{eq:task}). As a consequence,
we find that $W'(\psi)$ has measure bounded below by the implicit
constant in~(\ref{eq:task}). In particular, it has positive measure,
so it must have full measure, by Theorem~\ref{thm:01}. This completes
the proof of Theorem~\ref{thm:multiv}.\qed

\section{Proof of Theorem~\ref{thm:catlin}}\label{sec:proof-theor-refthm:c}

The cases of Theorem~\ref{thm:catlin} where $m>1$ were proved by
Beresnevich and Velani in~\cite{BVKG} and the case $m=n=1$ follows
from Koukoulopoulos--Maynard, so it is only left to establish the
cases where $m=1$ and $n>1$. 

\

In this case, notice that we have
\begin{equation}\label{eq:4}
\Phi_1(\q)\asymp\frac{\varphi(\gcd(\q))}{\gcd(\q)}\abs{\q}.
\end{equation}
Let $\overline \psi(\q) = \sup_{t\geq 1}\frac{\psi(t\q)}{t}$.  We can assume without loss of generality that
\begin{equation}\label{eq:13}
\psi(\q)/\abs{\q}\to 0 \quad\textrm{as}\quad \abs{\q}\to\infty.
\end{equation}
Otherwise, one can easily show that
$W(\psi) = W(\overline\psi)=[0,1]^{n}$.

We first consider the convergence part of
Theorem~\ref{thm:catlin}. Suppose
\begin{equation}\label{eq:2}
\abs{\q\cdot\x - p} < \psi(\q)
\end{equation}
and let $t = \gcd(p,\q)$. Then
\begin{equation*}
\abs{\q'\cdot\x - p'} < \frac{\psi(t\q')}{t},
\end{equation*}
where $\q':=\q/t$ and $p':=p/t$, and therefore we have
\begin{equation*}
\abs{\q'\cdot \x-p'} < \overline\psi(\q')
\end{equation*}
where $\gcd(p', \q')=1$. This, together with the
assumption~(\ref{eq:13}), implies that
$W(\psi) \subset W'(\overline\psi)$. Note that the convergence of the
series in~(\ref{eq:catlindiv}) is equivalent to the convergence of the
series in~(\ref{eq:maindiv}) applied to $\overline\psi$. Therefore, by
Theorem~\ref{thm:multiv}, $\Leb(W'(\overline\psi))=0$ and so
$\Leb(W(\psi))=0$.

Now we will show that the divergence part of the $m=1$ cases of
Theorem~\ref{thm:catlin} are equivalent to the $m=1$ cases of a
version of Theorem~\ref{thm:multiv} where the
requirement~(\ref{eq:coprimality}) is removed. We follow the argument
in~\cite[Appendix]{randomfractions} where the equivalence is shown in
the case $(m,n)=(1,1)$.

We claim that
\begin{equation}\label{eq:catlinpsi}
  W(\psi) = W(\overline\psi).
\end{equation}
Since $\psi \leq \overline\psi$, we automatically have
$W(\psi)\subset W(\overline\psi)$. Now suppose
$\x\in W(\overline \psi)$. Then there are infinitely many $(p,\q)$
for which
\begin{equation}\label{eq:3}
  \abs{\q\cdot\x- p} < \overline\psi(\q)
\end{equation}
holds. For a given $\q\in\ZZ^n\setminus{0}$,
let $t_\q$ be an integer for which
$\overline \psi(\q) = \frac{\psi(t_\q\q)}{t_\q}$. (Such an integer is
guaranteed to exist because we have assumed that $\psi(\q)/\abs{\q}$
approaces $0$.) Note that if $(p,\q)$ solves~(\ref{eq:3}), then
\begin{equation*}
  \abs{t_\q\q\cdot\x-t_\q p} < \psi(t_\q\q). 
\end{equation*}
In this way, infinitely many solutions to~(\ref{eq:3}) give rise to
infinitely many solutions to~(\ref{eq:2}). This establishes
$W(\overline \psi)\subset W(\psi)$, which proves~(\ref{eq:catlinpsi}).

Now, assume the weak form of Theorem~\ref{thm:multiv} where the
coprimality condition~(\ref{eq:coprimality}) is removed. Suppose
$\psi:\ZZ^n\to\RR_{\geq 0}$ satisfies the divergence
condition~(\ref{eq:catlindiv}). Then, recalling~(\ref{eq:4}), we have
that $\overline\psi$ satisfies the divergence
condition~(\ref{eq:maindiv}). Therefore, by our assumption, we have
$\Leb\parens*{W(\overline\psi)}=1$, hence by~(\ref{eq:catlinpsi}),
$\Leb\parens*{W(\psi)}=1$, and the divergence part of
Theorem~\ref{thm:catlin} holds.

Conversely, suppose Theorem~\ref{thm:catlin} holds, and suppose
$\psi$ satisfies the divergence condition~(\ref{eq:maindiv}). Since
$\psi \leq \overline\psi$, we must also have that $\overline\psi$
satisfies~(\ref{eq:maindiv}), that is, $\psi$
satisfies~(\ref{eq:catlindiv}). Therefore, $\Leb\parens*{W(\psi)}=1$,
by Theorem~\ref{thm:catlin}. This proves the weak form of
Theorem~\ref{thm:multiv} in the cases where $m=1$. \qed

\bibliographystyle{plain}

%\bibliography{../bibliography.bib}

\end{document}